\providecommand{\U}[1]{\protect\rule{.1in}{.1in}}
\providecommand{\U}[1]{\protect\rule{.1in}{.1in}}
\newtheorem{theorem}{Theorem}[section]
\newtheorem{corollary}{Corollary}[section]
\newtheorem{lemma}{Lemma}[section]
\newtheorem{remark}{Remark}[section]
\renewcommand{\@biblabel}[1]{}
\begin{document}

\begin{center}
{\Large \textbf{Tail product-limit process for truncated data with application
to extreme value index estimation}}

\medskip\medskip

{\large Souad Benchaira, Djamel Meraghni, Abdelhakim Necir}$^{\ast}$\medskip

{\small \textit{Laboratory of Applied Mathematics, Mohamed Khider University,
Biskra, Algeria}}\medskip\medskip%
\[
\]

\end{center}

\noindent\textbf{Abstract}\medskip

\noindent A weighted Gaussian approximation to tail product-limit process for
Pareto-like distributions of randomly right-truncated data is provided and a
new consistent and asymptotically normal estimator of the extreme value index
is derived. A simulation study is carried out to evaluate the finite sample
behavior of the proposed estimator.\medskip\medskip

\noindent\textbf{Keywords:} Empirical process; Extreme value index;
Heavy-tails; Hill estimator; Lynden-Bell estimator; Random truncation.\medskip

\noindent\textbf{AMS 2010 Subject Classification:} 60G70, 60F17, 62G30.

\vfill

\vfill

\noindent{\small $^{\text{*}}$Corresponding author:
\texttt{necirabdelhakim@yahoo.fr} \newline}

\noindent{\small \textit{E-mail addresses:}\newline%
\texttt{benchaira.s@hotmail.fr} (S.~Benchaira)\newline%
\texttt{djmeraghni@yahoo.com} (D.~Meraghni)}

\section{\textbf{Introduction\label{sec1}}}

\noindent Let $\left(  \mathbf{X}_{i},\mathbf{Y}_{i}\right)  ,$ $1\leq i\leq
N$ be a sample of size $N\geq1$ from a couple $\left(  \mathbf{X}%
,\mathbf{Y}\right)  $ of independent random variables (rv's) defined over some
probability space $\left(  \Omega,\mathcal{A},\mathbf{P}\right)  ,$ with
continuous marginal distribution functions (df's) $\mathbf{F}$ and
$\mathbf{G}$ respectively.$\ $Suppose that $\mathbf{X}$ is truncated to the
right by $\mathbf{Y},$ in the sense that $\mathbf{X}_{i}$ is only observed
when $\mathbf{X}_{i}\leq\mathbf{Y}_{i}.$ This model of randomly truncated data
commonly finds its applications in such areas like astronomy, economics,
medicine and insurance. In the actuarial world, for instance, it is usual that
the insurer claim data do not correspond to the underlying losses, because
they are truncated from above. Indeed, when facing large claims, the insurance
company specifies an upper limit to the amounts to be paid out. The excesses
over this fixed threshold are then covered by a reinsurance company. This kind
of reinsurance is called excess-loss reinsurance \citep[see,
e.g.,][]{EKM-97}. In life insurance, the upper limit, which may be random, is
called the cedent company retention level whereas in non-life insurance, it is
called the deductible. The usefulness of the statistical analysis under random
truncation is shown in \cite{Hrbst-99}, where the author applies truncated
model techniques to estimate loss reserves for IBNR (incurred but not
reported) claim amounts.\textbf{\ }For a recent discussion on randomly
right-truncated insurance claims, one refers to \cite{EO-2008}. Some examples
of truncated data from astronomy and economics can be found in \cite{W-85} and
for applications in the analysis of AIDS data, see \cite{WM-1989}. In
reliability, a real dataset, consisting in lifetimes of automobile brake pads
and already considered by \cite{Lawless} in page 69, was recently analyzed in
\cite{GS2014} as an application of randomly truncated heavy-tailed
models.\textbf{ }Since the focus is on datasets that contain extreme values,
then it would be natural to assume that both survival functions $\overline
{\mathbf{F}}:=1-\mathbf{F}$\textbf{\ }and $\overline{\mathbf{G}}%
:=1-\mathbf{G}$ are regularly varying at infinity with tail indices
$\gamma_{1}>0$ and $\gamma_{2}>0$ respectively. That is, we have, for any
$x>0,$%
\begin{equation}
\lim_{z\rightarrow\infty}\frac{\overline{\mathbf{F}}\left(  xz\right)
}{\overline{\mathbf{F}}\left(  z\right)  }=x^{-1/\gamma_{1}}\text{ and }%
\lim_{z\rightarrow\infty}\frac{\overline{\mathbf{G}}\left(  xz\right)
}{\overline{\mathbf{G}}\left(  z\right)  }=x^{-1/\gamma_{2}}. \label{RV-1}%
\end{equation}
This class of distributions, which includes models such as Pareto, Burr,
Fr\'{e}chet, L\'{e}vy-stable and log-gamma, takes a prominent role in extreme
value theory. Also known as heavy-tailed, Pareto-type or Pareto-like
distributions, they provide appropriate descriptions for large insurance
claims, log-returns, large price fluctuations,
etc...\ \citep[see, e.g.,][]{Res06}.\medskip

\noindent Let us denote $\left(  X_{i},Y_{i}\right)  ,$ $i=1,...,n$ to be the
observed data, as copies of a couple of rv's $\left(  X,Y\right)  ,$
corresponding to the truncated sample $\left(  \mathbf{X}_{i},\mathbf{Y}%
_{i}\right)  ,$ $i=1,...,N,$ where $n=n_{N}$ is a sequence of discrete rv's.
By of the law of large numbers, we have $n/N\overset{\mathbf{P}}{\rightarrow
}p:=\mathbf{P}\left(  \mathbf{X}\leq\mathbf{Y}\right)  $ as$\ n\overset
{\mathbf{P}}{\rightarrow}\infty.$ For convenience, throughout the paper, the
convergence in probability of $n$ and/or any of its subsequences is simply
denoted by $\rightarrow$ instead of $\overset{\mathbf{P}}{\rightarrow}.$ The
joint distribution of $X_{i}$ and $Y_{i}$ is%
\[%
\begin{array}
[c]{ll}%
H\left(  x,y\right)   & :=\mathbf{P}\left(  X\leq x,Y\leq y\right)  \medskip\\
& =\mathbf{P}\left(  \mathbf{X}\leq x,\mathbf{Y}\leq y\mid\mathbf{X}%
\leq\mathbf{Y}\right)  =p^{-1}%
{\displaystyle\int_{0}^{y}}
\mathbf{F}\left(  \min\left(  x,z\right)  \right)  d\mathbf{G}\left(
z\right)  .
\end{array}
\]
The marginal df's of the observed $X^{\prime}s$ and $Y^{\prime}s,$
respectively denoted by $F$ and $G,$ are equal to%
\[
F\left(  x\right)  :=p^{-1}\int_{0}^{x}\overline{\mathbf{G}}\left(  z\right)
d\mathbf{F}\left(  z\right)  \text{ and }G\left(  y\right)  :=p^{-1}\int
_{0}^{y}\mathbf{F}\left(  z\right)  d\mathbf{G}\left(  z\right)  .
\]
It follows that the corresponding tails are%
\begin{equation}
\overline{F}\left(  x\right)  =-p^{-1}\int_{x}^{\infty}\overline{\mathbf{G}%
}\left(  z\right)  d\overline{\mathbf{F}}\left(  z\right)  \text{ and
}\overline{G}\left(  y\right)  =-p^{-1}\int_{y}^{\infty}\mathbf{F}\left(
z\right)  d\overline{\mathbf{G}}\left(  z\right)  .\label{FbarGbar}%
\end{equation}
It is clear that the asymptotic behavior of $\overline{F}$ simultaneously
depends on $\overline{\mathbf{G}}$ and $\overline{\mathbf{F}},$ while that of
$\overline{G}$ only relies on $\overline{\mathbf{G}}\mathbf{.}$ Making use of
Proposition B.1.10 in \cite{deHF06},\ for the regularly varying functions
$\overline{\mathbf{F}}$ and $\overline{\mathbf{G}},$ we may readily show that
both $\overline{G}$ and $\overline{F}$ are regularly varying at infinity as
well, with respective tail indices $\gamma_{2}$ and $\gamma:=\gamma_{1}%
\gamma_{2}/\left(  \gamma_{1}+\gamma_{2}\right)  .$ It is worth noting that
the issue of analyzing extreme values in the context of random truncation, is
at an early stage. Indeed, the first contribution was made in the recent paper
of \cite{GS2015}, where the authors exploited the above relation between the
three indices to define an estimator of $\gamma_{1}$ by considering the
classical Hill estimators of $\gamma$ and $\gamma_{2}$ \citep{Hill75} as
functions of two distinct numbers of top statistics. However, they did not
handle the case where these numbers are equal because of the difficulty in
assessing the dependence between the two Hill estimators. In the present work,
we introduce a tail product-limit process for which we provide a weighted
Gaussian approximation as well. This tool will be very helpful when dealing
with the estimation of any tail related quantity.\textbf{\ }In particular, it
will lead to the asymptotic normality of the extreme value index estimator
that we define, under random right-truncation, as a function of a single
sample fraction of upper order statistics. But, prior to describing our
estimation methodology, let us note that, as mentioned by \cite{GS2015}, in
order to ensure that it remains enough extreme data for the inference to be
accurate, we need to impose the condition $\gamma_{1}<\gamma_{2}.$ In other
words, we consider the situation where the tail of the rv of interest
$\mathbf{X}$ is not too contaminated by the truncation rv $\mathbf{Y.}$ Since
$\mathbf{F}$ and $\mathbf{G}$ are heavy-tailed, then their right endpoints are
infinite and thus they are equal. Hence, from \cite{W-85}, we may write%
\begin{equation}
\int_{x}^{\infty}\frac{d\mathbf{F}\left(  y\right)  }{\mathbf{F}\left(
y\right)  }=\int_{x}^{\infty}\frac{dF\left(  y\right)  }{C\left(  y\right)
},\label{int}%
\end{equation}
where%
\begin{equation}
C\left(  z\right)  :=\mathbf{P}\left(  X\leq z\leq Y\right)  =F\left(
z\right)  -G\left(  z\right)  .\label{C}%
\end{equation}
Differentiating $\left(  \ref{int}\right)  $ leads to the following crucial
equation%
\begin{equation}
C\left(  x\right)  d\mathbf{F}\left(  x\right)  =\mathbf{F}\left(  x\right)
dF\left(  x\right)  ,\label{df}%
\end{equation}
\citep[see, for instance,][]{SS09}, whose solution is defined by
$\mathbf{F}\left(  x\right)  =\exp-\int_{x}^{\infty}dF\left(  z\right)
/C\left(  z\right)  .$\textbf{ }Replacing $F$ and $C$ by their respective
empirical counterparts%
\[
F_{n}\left(  x\right)  :=n^{-1}\sum\limits_{i=1}^{n}\mathbf{1}\left(
X_{i}\leq x\right)  \text{ and }C_{n}\left(  x\right)  :=n^{-1}\sum
\limits_{i=1}^{n}\mathbf{1}\left(  X_{i}\leq x\leq Y_{i}\right)  ,
\]
yields the product-limit estimator%
\[
\mathbf{F}_{n}\left(  x\right)  :=\prod_{i:X_{i}>x}\exp\left\{  -\frac
{1}{nC_{n}\left(  X_{i}\right)  }\right\}  ,
\]
to the underlying df $\mathbf{F.}$ The first mathematical investigation on
this estimator may be attributed to \cite{W-85}\ and the central limit theorem
under random truncation was established by \cite{sw2008}. Note that the
approximation $\exp\left(  -t\right)  \sim1-t,$ for small $t>0,$ results in
the well-known estimator introduced by \cite{LB-71}. Let us now introduce a
tail product-limit process corresponding to $\mathbf{F}_{n}\ $as follows:%
\begin{equation}
\mathbf{D}_{n}\left(  x\right)  :=\sqrt{k}\left(  \frac{\overline{\mathbf{F}%
}_{n}\left(  xX_{n-k:n}\right)  }{\overline{\mathbf{F}}_{n}\left(
X_{n-k:n}\right)  }-x^{-1/\gamma_{1}}\right)  ,\text{ }x>0,\label{KM-P}%
\end{equation}
where $X_{1:n}\leq...\leq X_{1:n}$ denote the order statistics pertaining to
$X_{1},...,X_{n}$ and $k=k_{n}$ is a sequence of discrete rv's satisfying%
\begin{equation}
1<k<n,\text{ }k\rightarrow\infty\text{ and }k/n\rightarrow0\text{ as
}n\rightarrow\infty.\label{K}%
\end{equation}
Observe that, in the case of complete data we have $\mathbf{F}_{n}\equiv
F_{n}$ with $\overline{F}_{n}\left(  X_{n-k:n}\right)  =k/n$ and thus the
process defined in $\left(  \ref{KM-P}\right)  $ becomes%
\[
D_{n}\left(  x\right)  :=\sqrt{k}\left(  \dfrac{n}{k}\overline{F}_{n}\left(
xX_{n-k:n}\right)  -x^{-1/\gamma_{1}}\right)  .
\]
By jointly applying Theorems 2.4.8 and 5.1.4 (pages 52 and 161) in
\cite{deHF06} we have that, for $x_{0}>0$ and $0<\xi<1/2,$%
\begin{equation}
\sup_{x\geq x_{0}}x^{(1/2-\xi)/\gamma_{1}}\left\vert D_{n}\left(  x\right)
-\Gamma\left(  x;W\right)  -x^{-1/\gamma_{1}}\dfrac{x^{\tau_{1}/\gamma_{1}}%
-1}{\tau_{1}\gamma_{1}}\sqrt{k}A_{0}\left(  n/k\right)  \right\vert
\overset{\mathbf{P}}{\rightarrow}0,\label{TP-C}%
\end{equation}
provided that $F$ fulfills the second-order regular variation condition with
auxiliary function $A_{0}$ tending to zero, not changing sign near infinity,
having a regularly varying absolute value with index $\tau_{1}<0$ and
satisfying $\sqrt{k}A_{0}\left(  n/k\right)  =O\left(  1\right)  .$ Here
$\Gamma\left(  x;W\right)  :=W\left(  x^{-1/\gamma_{1}}\right)  -x^{-1/\gamma
_{1}}W\left(  1\right)  $ with $\left\{  W\left(  s\right)  ;\text{ }0\leq
s\leq1\right\}  $ being a standard Wiener process. Many authors used this
approximation to establish the limit distributions of several statistics of
heavy-tailed distributions, such as tail index estimators (see, e.g.,
\citeauthor{deHF06}, \citeyear[page 76]{deHF06}) and goodness-of-fit
statistics \citep{KP-2008}. The main goal of this paper is to provide an
analogous result to $\left(  \ref{TP-C}\right)  $ in the random truncation
setting through the tail product-limit process $\left(  \ref{KM-P}\right)  ,$
which, to the best of our knowledge, was not addressed yet in the extreme
value theory literature.\medskip

\noindent The rest of the paper is organized as follows. In Section
\ref{sec2}, we present our main result which consists in a Gaussian
approximation to the tail product-limit process $\mathbf{D}_{n}\left(
x\right)  .$ As an application, we introduce, in Section \ref{sec3}, a new
Hill-type estimator \citep{Hill75} for the tail index $\gamma_{1}\ $and we
establish its consistency and asymptotic normality. The finite sample behavior
of the proposed estimator is checked by simulation in Section \ref{sec4}. The
proofs are postponed to Section \ref{sec5} and some results that are
instrumental to our needs are gathered in two lemmas in the Appendix.

\section{\textbf{Main results\label{sec2}}}

\noindent Weak approximations of extreme value theory based statistics are
achieved in the second-order framework \citep[see][]{deHS96}. Thus, it seems
quite natural to suppose that df's $\mathbf{F}$\ and $\mathbf{G}$\ satisfy the
well-known second-order condition of regular variation that we express in
terms of the tail quantile functions. That is, we assume that for $x>0,$ we
have%
\begin{equation}
\underset{t\rightarrow\infty}{\lim}\dfrac{\mathbb{U}_{\mathbf{F}}\left(
tx\right)  /\mathbb{U}_{\mathbf{F}}\left(  t\right)  -x^{\gamma_{1}}%
}{\mathbf{A}_{\mathbf{F}}\left(  t\right)  }=x^{\gamma_{1}}\dfrac{x^{\tau_{1}%
}-1}{\tau_{1}}, \label{second-order}%
\end{equation}
and%
\begin{equation}
\underset{t\rightarrow\infty}{\lim}\dfrac{\mathbb{U}_{\mathbf{G}}\left(
tx\right)  /\mathbb{U}_{\mathbf{G}}\left(  t\right)  -x^{\gamma_{2}}%
}{\mathbf{A}_{\mathbf{G}}\left(  t\right)  }=x^{\gamma_{2}}\dfrac{x^{\tau_{2}%
}-1}{\tau_{2}}, \label{second-orderG}%
\end{equation}
where $\tau_{1},\tau_{2}<0$\ are the second-order parameters and
$\mathbf{A}_{\mathbf{F}},$ $\mathbf{A}_{\mathbf{G}}$\ are functions tending to
zero and not changing signs near infinity with regularly varying absolute
values at infinity with indices $\tau_{1},$ $\tau_{2}$ respectively. For any
df $K,$ the function $\mathbb{U}_{K}\left(  t\right)  :=K^{\leftarrow}\left(
1-1/t\right)  ,$ $t>1,$ stands for the tail quantile function.

\begin{theorem}
\label{Theorem1}Assume that both second-order conditions $(\ref{second-order}%
)$\ and $(\ref{second-orderG})$\ hold with $\gamma_{1}<\gamma_{2}.$\ Let
$k=k_{n}$\ be a sequence satisfying $(\ref{K}),$\ then there exist a
function\textbf{\ }$\mathbf{A}_{0}\left(  t\right)  \sim\mathbf{A}%
_{\mathbf{F}}\left(  1/\overline{\mathbf{F}}\left(  \mathbb{U}_{F}\left(
t\right)  \right)  \right)  $\textbf{\ }and a standard Wiener process
$\left\{  \mathbf{W}\left(  s\right)  ;\text{ }0\leq s\leq1\right\}
,$\ defined on the probability space $\left(  \Omega,\mathcal{A}%
,\mathbf{P}\right)  ,$\ such that, for $0<\xi<1/2-\gamma/\gamma_{2}$ and
$x_{0}>0,$\ we have%
\[
\sup_{x\geq x_{0}}x^{\left(  1/2-\xi\right)  /\gamma-1/\gamma_{2}}\left\vert
\mathbf{D}_{n}\left(  x\right)  -\mathbf{\Gamma}\left(  x;\mathbf{W}\right)
-x^{-1/\gamma_{1}}\dfrac{x^{\tau_{1}/\gamma_{1}}-1}{\gamma_{1}\tau_{1}}%
\sqrt{k}\mathbf{A}_{0}\left(  n/k\right)  \right\vert \overset{\mathbf{P}%
}{\rightarrow}0,
\]
as $n\rightarrow\infty,$ provided that $\sqrt{k}\mathbf{A}_{0}\left(
n/k\right)  =O\left(  1\right)  ,$\textbf{\ }where$\mathbb{\ }\left\{
\Gamma\left(  x;\mathbf{W}\right)  ;\text{ }x>0\right\}  $\textbf{\ }is a
Gaussian process defined by%
\begin{align*}
&  \mathbf{\Gamma}\left(  x;\mathbf{W}\right)
\begin{tabular}
[c]{l}%
$:=$%
\end{tabular}
\frac{\gamma}{\gamma_{1}}x^{-1/\gamma_{1}}\left\{  x^{1/\gamma}\mathbf{W}%
\left(  x^{-1/\gamma}\right)  -\mathbf{W}\left(  1\right)  \right\} \\
&  \ \ \ \ \ \ +\frac{\gamma}{\gamma_{1}+\gamma_{2}}x^{-1/\gamma_{1}}\int
_{0}^{1}s^{-\gamma/\gamma_{2}-1}\left\{  x^{1/\gamma}\mathbf{W}\left(
x^{-1/\gamma}s\right)  -\mathbf{W}\left(  s\right)  \right\}  ds.
\end{align*}

\end{theorem}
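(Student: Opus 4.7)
The starting point is to linearize the product-limit estimator. Writing
\begin{equation*}
-\log\mathbf{F}_n(y)=\sum_{i:X_i>y}\frac{1}{nC_n(X_i)}=\Lambda_n(y),\qquad \Lambda_n(y):=\int_y^{\infty}\frac{dF_n(z)}{C_n(z)},
\end{equation*}
together with $\overline{\mathbf{F}}_n(y)=1-\exp(-\Lambda_n(y))=\Lambda_n(y)\{1+O(\Lambda_n(y))\}$ for large $y$, the ratio in the definition of $\mathbf{D}_n$ reduces to $\Lambda_n(xX_{n-k:n})/\Lambda_n(X_{n-k:n})$, up to an error of order $\overline{\mathbf{F}}(X_{n-k:n})\sim k/n$ that is negligible against the $\sqrt{k}$ normalization.

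Next I would establish a Gaussian approximation for the bivariate empirical process built from the observed pairs $(X_i,Y_i)$. A Skorokhod-type construction on an enlarged probability space provides a single standard Wiener process $\mathbf{W}$ such that $\sqrt{n}(F_n-F)$ and $\sqrt{n}(G_n-G)$, hence $\sqrt{n}(C_n-C)$, are approximated by explicit functionals of $\mathbf{W}$ uniformly in the relevant tail region. Substituting these into the Duhamel-type identity
\begin{equation*}
\Lambda_n(y)-\Lambda(y)=\int_y^{\infty}\frac{d(F_n-F)(z)}{C(z)}-\int_y^{\infty}\frac{(C_n-C)(z)}{C(z)^{2}}\,dF(z)+R_n(y),
\end{equation*}
with remainder $R_n$ quadratic in the tail sup-norms of $F_n-F$ and $G_n-G$, and rescaling via the change of variable $z=X_{n-k:n}u$, the regular variation of $\overline{F}$ and $\overline{G}$ (with indices $1/\gamma$ and $1/\gamma_2$) converts these two stochastic terms into the two constituents of $\mathbf{\Gamma}(x;\mathbf{W})$: the first integral, in which $dF(z)/C(z)\sim(\gamma/\gamma_1)\,dz/z$ in the limit, contributes the $x^{1/\gamma}\mathbf{W}(x^{-1/\gamma})-\mathbf{W}(1)$ piece with prefactor $\gamma/\gamma_1$, while the second, after the substitution $s=\overline{F}(uX_{n-k:n})/\overline{F}(X_{n-k:n})$, produces the weighted integral with kernel $s^{-\gamma/\gamma_2-1}$ and prefactor $\gamma/(\gamma_1+\gamma_2)$.

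The deterministic bias comes from expanding $\Lambda(xX_{n-k:n})/\Lambda(X_{n-k:n})-x^{-1/\gamma_1}$ after replacing $X_{n-k:n}$ by $\mathbb{U}_F(n/k)$. Using the second-order condition \eqref{second-order} on $\mathbf{F}$ together with the Karamata-type equivalence $\overline{F}\asymp\overline{\mathbf{F}}\,\overline{\mathbf{G}}$ that allows one to transport the second-order behavior from $\mathbb{U}_{\mathbf{F}}$ to $\mathbb{U}_F$, this yields the stated bias $x^{-1/\gamma_1}(x^{\tau_1/\gamma_1}-1)/(\gamma_1\tau_1)\sqrt{k}\mathbf{A}_0(n/k)$ with $\mathbf{A}_0(t)\sim\mathbf{A}_{\mathbf{F}}(1/\overline{\mathbf{F}}(\mathbb{U}_F(t)))$; the second-order contribution from $\mathbf{G}$ driven by $\tau_2$ is of strictly smaller order under $\gamma_1<\gamma_2$.

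The step I anticipate as the main obstacle is the weighted uniform control in $x\ge x_0$. Three issues must be reconciled. First, the random threshold $X_{n-k:n}$ must be replaced by $\mathbb{U}_F(n/k)$, which introduces an additional $\mathbf{W}(1)$ fluctuation consistent with the structure of $\mathbf{\Gamma}$. Second, the remainder $R_n$ and the linearization error $O(\Lambda_n^2)$ must be bounded uniformly through Potter-type inequalities on $\overline{F}$ and $\overline{G}$. Third, the precise weight $x^{(1/2-\xi)/\gamma-1/\gamma_2}$ must be calibrated so that the Wiener integral against $s^{-\gamma/\gamma_2-1}$ converges near $s=0$ (this forces $\xi<1/2-\gamma/\gamma_2$, explaining why $\gamma_1<\gamma_2$ is essential) while still controlling the behavior as $x\to\infty$. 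I expect these estimates to be formalized in the two auxiliary lemmas announced in the Appendix.
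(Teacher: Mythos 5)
Your overall architecture (linearize $\overline{\mathbf{F}}_n=1-e^{-\Lambda_n}\approx\Lambda_n$, compare $\Lambda_n$ with $\Lambda$, extract the bias from the second-order condition on $\mathbf{F}$) matches the paper, but there is a genuine error in where you locate the two Gaussian components, and following your plan would produce the wrong limit. In your Duhamel identity the term $-\int_y^{\infty}(C_n-C)(z)C^{-2}(z)\,dF(z)$ is \emph{not} the source of the kernel $s^{-\gamma/\gamma_2-1}$: it is asymptotically negligible. Writing $a_k:=\mathbb{U}_F(n/k)$, its size relative to $\Lambda(a_kx)$, after multiplication by $\sqrt{k}$, is of order $\sqrt{(k/n)/C(a_k)}=\big(1+o(1)\big)\sqrt{\overline{F}(a_k)/\overline{G}(a_k)}\rightarrow0$, precisely because $\gamma<\gamma_2$ makes $\overline{F}$ lighter-tailed than $\overline{G}$; this is the paper's term $S_{n2}$, killed via the bound involving $\theta_n=\sup C/C_n$ and Lemma \ref{Lemma1}$(ii)$. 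Both pieces of $\mathbf{\Gamma}(x;\mathbf{W})$ in fact come from your \emph{first} term $\int_y^{\infty}d(F_n-F)(z)/C(z)$: integration by parts yields a boundary term, which gives $\frac{\gamma}{\gamma_1}x^{1/\gamma}\mathbf{W}(x^{-1/\gamma})$ (and, through the denominator at $x=1$, the $-\mathbf{W}(1)$ and $-\mathbf{W}(s)$ pieces), \emph{plus} the integral $\int(\overline{F}_n-\overline{F})\,dC/C^{2}$; since $dC\sim d\overline{G}$ in the tail, this second integral is of exact order one after normalization and is what produces $\frac{\gamma}{\gamma_1+\gamma_2}x^{1/\gamma}\int_0^1t^{-\gamma/\gamma_2-1}\mathbf{W}(x^{-1/\gamma}t)\,dt$ (the paper's $S_{n1}^{(1)}$, $S_{n1}^{(2)}$ and the $K_n$ analysis). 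The distinction is not bookkeeping: in your plan the second component would be driven by the $Y$-sample fluctuations, hence by a (partly) independent Gaussian input, which would give a different joint law and a wrong asymptotic variance in Corollary \ref{Cor1}; in reality the whole limit is a functional of the single Wiener process approximating the tail empirical process of the observed $X$'s alone.

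A secondary but real obstacle is your appeal to a Skorokhod/KMT-type construction for $\sqrt{n}(F_n-F)$ and $\sqrt{n}(G_n-G)$ ``uniformly in the tail region''. Under the sole assumptions $k\rightarrow\infty$, $k/n\rightarrow0$, a strong approximation with error $O(n^{-1/2}\log n)$ does not survive the blow-up by $\sqrt{n/k}$ unless $k\gg\log^2 n$, which is not assumed. The paper instead uses the weighted weak approximation of the uniform tail empirical process $\alpha_n$ of \cite{EHL-2006}, $\sup_{0<s\leq1}s^{-\eta}\vert\alpha_n(s)-\mathbf{W}(s)\vert\overset{\mathbf{P}}{\rightarrow}0$ for $0\leq\eta<1/2$, and it is this weight, combined with Potter bounds, the L\'evy modulus of continuity of $\mathbf{W}$, and the appendix lemmas on $C$ and $\Lambda$, that delivers the uniform control in $x\geq x_0$ with the exponent $(1/2-\xi)/\gamma-1/\gamma_2$ and forces $\xi<1/2-\gamma/\gamma_2$. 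Your treatment of the bias term (and the observation that the $\tau_2$-contribution is of smaller order when $\gamma_1<\gamma_2$) is consistent with the paper's handling of $\mathbf{M}_{n4}$ via the uniform second-order inequality.
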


\begin{remark}
\label{remak1}A very large value of $\gamma_{2}$\textbf{\ }yields a $\gamma
$-value that is very close to $\gamma_{1},$ meaning that the really observed
sample is almost the whole dataset. In other words, the complete data case
corresponds to the situation when $1/\gamma_{2}\equiv0,$\ in which case we
have $\gamma\equiv\gamma_{1}.$\ It follows that%
\[
\frac{\gamma}{\gamma_{1}+\gamma_{2}}\int_{0}^{1}s^{-\gamma/\gamma_{2}%
-1}\left\{  x^{1/\gamma}\mathbf{W}\left(  x^{-1/\gamma}s\right)
-\mathbf{W}\left(  s\right)  \right\}  ds\equiv0,
\]
and therefore\textbf{\ }$\Gamma\left(  x;\mathbf{W}\right)  =\mathbf{W}\left(
x^{-1/\gamma_{1}}\right)  -x^{-1/\gamma_{1}}\mathbf{W}\left(  1\right)
,$\textbf{\ }which agrees with the weak approximation $(\ref{TP-C}).$
\end{remark}

\section{\textbf{Tail index estimation\label{sec3}}}

\noindent We start the construction of our estimator by noting that from
Theorem 1.2.2 in \cite{deHF06}, the first-order condition $(\ref{RV-1})$ (for
$\overline{\mathbf{F}}$) implies that%
\[
\lim_{t\rightarrow\infty}\frac{1}{\overline{\mathbf{F}}\left(  t\right)  }%
\int_{t}^{\infty}x^{-1}\overline{\mathbf{F}}\left(  x\right)  dx=\gamma_{1},
\]
which, by an integration by parts, becomes%
\begin{equation}
\lim_{t\rightarrow\infty}\frac{1}{\overline{\mathbf{F}}\left(  t\right)  }%
\int_{t}^{\infty}\log\dfrac{x}{t}d\mathbf{F}(x)=\gamma_{1}. \label{log}%
\end{equation}
Replacing $\mathbf{F}$ by $\mathbf{F}_{n}$ and letting $t=X_{n-k:n}$ yields%
\[
\widehat{\gamma}_{1}:=\frac{1}{\overline{\mathbf{F}}_{n}\left(  X_{n-k:n}%
\right)  }\int_{X_{n-k:n}}^{\infty}\log\dfrac{x}{X_{n-k:n}}d\mathbf{F}%
_{n}\left(  x\right)  ,
\]
as a new estimator to $\gamma_{1}.$ By setting $\varphi_{n}^{\left(  1\right)
}\left(  x\right)  :=\mathbf{1}\left\{  x\geq X_{n-k:n}\right\}  \log\left(
x/X_{n-k:n}\right)  $ and $\varphi_{n}^{\left(  2\right)  }\left(  x\right)
:=\mathbf{1}\left\{  x\geq X_{n-k:n}\right\}  ,$ this may be rewritten into%
\[
\widehat{\gamma}_{1}=\frac{\int_{0}^{\infty}\varphi_{n}^{\left(  1\right)
}\left(  x\right)  d\mathbf{F}_{n}\left(  x\right)  }{\int_{0}^{\infty}%
\varphi_{n}^{\left(  2\right)  }\left(  x\right)  d\mathbf{F}_{n}\left(
x\right)  }.
\]
From the empirical counterpart of equation $(\ref{df})$ we get%
\[
\int_{0}^{\infty}\varphi_{n}^{\left(  1\right)  }\left(  x\right)
d\mathbf{F}_{n}\left(  x\right)  =\frac{1}{n}\sum_{i=n-k}^{n}\frac
{\mathbf{F}_{n}\left(  X_{i:n}\right)  }{C_{n}\left(  X_{i:n}\right)  }%
\log\left(  X_{i:n}/X_{n-k:n}\right)  ,
\]
and%
\[
\int_{0}^{\infty}\varphi_{n}^{\left(  2\right)  }\left(  x\right)
d\mathbf{F}_{n}\left(  x\right)  =\frac{1}{n}\sum_{i=n-k}^{n}\dfrac
{\mathbf{F}_{n}\left(  X_{i:n}\right)  }{C_{n}\left(  X_{i:n}\right)  }.
\]
Finally, changing $i$ to $n-i+1$ yields%
\[
\widehat{\gamma}_{1}=\left(  \sum\limits_{i=1}^{k}\frac{\mathbf{F}_{n}\left(
X_{n-i+1:n}\right)  }{C_{n}\left(  X_{n-i+1:n}\right)  }\right)  ^{-1}%
\sum_{i=1}^{k}\frac{\mathbf{F}_{n}\left(  X_{n-i+1:n}\right)  }{C_{n}\left(
X_{n-i+1:n}\right)  }\log\frac{X_{n-i+1:n}}{X_{n-k:n}}.
\]

\begin{remark}
\label{remak2}For complete data, we have $\mathbf{F}_{n}\mathbf{\equiv}%
F_{n}\mathbf{\equiv}C_{n}$ and consequently $\widehat{\gamma}_{1}$ reduces to
the classical Hill estimator \citep{Hill75}.
\end{remark}

\begin{theorem}
\label{Theorem2}Assume that $\left(  \ref{RV-1}\right)  $ holds with
$\gamma_{1}<\gamma_{2}$ and let $k=k_{n}$ be an integer sequence satisfying
$(\ref{K}).$ Then $\widehat{\gamma}_{1}\rightarrow\gamma_{1}$ in probability.
Assume further that both second-order conditions $(\ref{second-order})$\ and
$(\ref{second-orderG})$\ hold and $\sqrt{k}\mathbf{A}_{\mathbf{0}}\left(
n/k\right)  =O\left(  1\right)  ,$\ then%
\begin{align*}
\sqrt{k}\left(  \widehat{\gamma}_{1}-\gamma_{1}\right)   &  =\frac{\sqrt
{k}\mathbf{A}_{0}\left(  n/k\right)  }{1-\tau_{1}}-\gamma\mathbf{W}\left(
1\right) \\
&  +\frac{\gamma}{\gamma_{1}+\gamma_{2}}\int_{0}^{1}\left(  \gamma_{2}%
-\gamma_{1}-\gamma\log s\right)  s^{-\gamma/\gamma_{2}-1}\mathbf{W}\left(
s\right)  ds+o_{\mathbf{P}}\left(  1\right)  .
\end{align*}

\end{theorem}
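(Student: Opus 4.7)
The plan is to reduce the estimator to a functional of the tail product-limit process $\mathbf{D}_n$ and then apply Theorem \ref{Theorem1}. First I would rewrite the numerator of $\widehat{\gamma}_1$ by integration by parts: since the boundary terms vanish,
\[
\int_{X_{n-k:n}}^{\infty}\log\frac{x}{X_{n-k:n}}\,d\mathbf{F}_{n}(x)
=\int_{X_{n-k:n}}^{\infty}\overline{\mathbf{F}}_{n}(x)\,\frac{dx}{x},
\]
so, after the change of variable $x=yX_{n-k:n}$, dividing by the denominator $\overline{\mathbf{F}}_n(X_{n-k:n})$ yields
\[
\widehat{\gamma}_1=\int_{1}^{\infty}\frac{\overline{\mathbf{F}}_{n}(yX_{n-k:n})}{\overline{\mathbf{F}}_{n}(X_{n-k:n})}\,\frac{dy}{y}.
\]
Because $\int_1^\infty y^{-1/\gamma_1-1}dy=\gamma_1$, substituting the definition (\ref{KM-P}) of $\mathbf{D}_n$ gives the clean representation
\[
\sqrt{k}\bigl(\widehat{\gamma}_1-\gamma_1\bigr)=\int_{1}^{\infty}\mathbf{D}_n(y)\,\frac{dy}{y}.
\]

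Next I would plug in the approximation from Theorem \ref{Theorem1}. Choose $\xi$ with $0<\xi<1/2-\gamma/\gamma_2$, which is possible since the assumption $\gamma_1<\gamma_2$ makes $\gamma/\gamma_2=\gamma_1/(\gamma_1+\gamma_2)<1/2$. The weighted sup-bound yields
\[
\Bigl|\mathbf{D}_n(y)-\mathbf{\Gamma}(y;\mathbf{W})-y^{-1/\gamma_1}\frac{y^{\tau_1/\gamma_1}-1}{\gamma_1\tau_1}\sqrt{k}\mathbf{A}_0(n/k)\Bigr|\le o_{\mathbf{P}}(1)\,y^{-(1/2-\xi)/\gamma+1/\gamma_2},
\]
and my choice of $\xi$ makes $y^{-(1/2-\xi)/\gamma+1/\gamma_2-1}$ integrable on $[1,\infty)$. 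Hence the error contribution to $\int_1^\infty\mathbf{D}_n(y)\,dy/y$ is $o_{\mathbf{P}}(1)$, and I am left to evaluate the bias integral and the Gaussian integral.

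The bias integral is handled by the substitution $u=y^{-1/\gamma_1}$, which gives $\int_1^\infty y^{-1/\gamma_1-1}(y^{\tau_1/\gamma_1}-1)/(\gamma_1\tau_1)\,dy=1/(1-\tau_1)$. For the Gaussian piece I would split $\mathbf{\Gamma}(y;\mathbf{W})$ into its two constituent integrals. The first part, after the substitution $s=y^{-1/\gamma}$, simplifies to $\frac{\gamma^2}{\gamma_1}\int_0^1 s^{-\gamma/\gamma_2-1}\mathbf{W}(s)\,ds-\gamma\mathbf{W}(1)$. The second part produces a double integral; swapping the order of integration by Fubini and using $\int_u^1 ds/s=-\log u$ converts the inner integral into a single integral with a $\log s$ weight. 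Collecting terms and using $\gamma=\gamma_1\gamma_2/(\gamma_1+\gamma_2)$ to simplify the coefficient $\gamma^2/\gamma_1-\gamma\gamma_1/(\gamma_1+\gamma_2)=\gamma(\gamma_2-\gamma_1)/(\gamma_1+\gamma_2)$ matches exactly the announced limit. I expect this algebraic consolidation—particularly the Fubini step and the coefficient bookkeeping—to be the main obstacle, since one has to track several cancellations arising from the relation between $\gamma$, $\gamma_1$ and $\gamma_2$.

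Finally, for the consistency statement, which requires only the first-order assumption (\ref{RV-1}), I would argue directly from the representation $\widehat{\gamma}_1-\gamma_1=\int_1^\infty\bigl[\overline{\mathbf{F}}_n(yX_{n-k:n})/\overline{\mathbf{F}}_n(X_{n-k:n})-y^{-1/\gamma_1}\bigr]dy/y$. Pointwise convergence of the integrand to zero follows from the consistency of the Woodroofe product-limit estimator combined with regular variation of $\overline{\mathbf{F}}$, and uniform control is supplied by Potter-type bounds, so dominated convergence yields $\widehat{\gamma}_1\to\gamma_1$ in probability. Under the second-order assumptions, this consistency is also a direct consequence of the asymptotic expansion established above.
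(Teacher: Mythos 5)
Your treatment of the Gaussian representation is correct and is essentially the paper's own route: the same identity $\sqrt{k}(\widehat{\gamma}_1-\gamma_1)=\int_1^\infty y^{-1}\mathbf{D}_n(y)\,dy$, followed by Theorem \ref{Theorem1}, the integrability check on the weight (your condition $\xi<1/2-\gamma/\gamma_2$ is exactly what makes $y^{-(1/2-\xi)/\gamma+1/\gamma_2-1}$ integrable), the substitution giving $1/(1-\tau_1)$ for the bias term, and the Fubini computation that reproduces $-\gamma\mathbf{W}(1)+\frac{\gamma}{\gamma_1+\gamma_2}\int_0^1(\gamma_2-\gamma_1-\gamma\log s)s^{-\gamma/\gamma_2-1}\mathbf{W}(s)\,ds$; your coefficient bookkeeping $\gamma^2/\gamma_1-\gamma\gamma_1/(\gamma_1+\gamma_2)=\gamma(\gamma_2-\gamma_1)/(\gamma_1+\gamma_2)$ matches the paper's identity (\ref{GAMMA}). (You should also note, for the Fubini step, that $\mathbf{E}\vert\mathbf{W}(s)\vert\le\sqrt{s}$ and $\gamma/\gamma_2<1/2$ guarantee absolute integrability, but this is routine.)

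The genuine gap is in the consistency claim under the first-order condition (\ref{RV-1}) alone. You argue that ``pointwise convergence of the integrand follows from the consistency of the Woodroofe product-limit estimator combined with regular variation of $\overline{\mathbf{F}}$, and uniform control is supplied by Potter-type bounds, so dominated convergence applies.'' This does not go through as stated: consistency of $\mathbf{F}_n$ is an absolute-error statement ($\sup_x\vert\mathbf{F}_n(x)-\mathbf{F}(x)\vert\to 0$), which gives no control of the ratio $\overline{\mathbf{F}}_n(yX_{n-k:n})/\overline{\mathbf{F}}(yX_{n-k:n})$ at points where $\overline{\mathbf{F}}$ is of order $k/n\to 0$; relative-error consistency of the product-limit estimator at intermediate random thresholds is precisely the hard content of the paper's analysis of $\mathbf{M}_{n1},\mathbf{M}_{n2},\mathbf{M}_{n3}$ via the weighted approximation of the uniform tail empirical process $\alpha_n$, the decomposition through $\Lambda_n-\Lambda$, and Lemma \ref{Lemma1}. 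Likewise, Potter's inequalities apply to the deterministic tail $\overline{\mathbf{F}}$, not to $\overline{\mathbf{F}}_n$, so the dominating function you invoke for dominated convergence would have to be a stochastic Potter-type bound on $\overline{\mathbf{F}}_n(yX_{n-k:n})/\overline{\mathbf{F}}_n(X_{n-k:n})$ uniformly in $y\ge 1$, which itself requires the tail-process machinery. The paper resolves this by observing that its uniform bound (\ref{MM}) on $\sqrt{k}(\mathbf{M}_{n1}+\mathbf{M}_{n2}+\mathbf{M}_{n3})$ holds under the first-order conditions only (only $\mathbf{M}_{n4}$ needs second order), and then dividing by $\sqrt{k}$; your proof needs either this ingredient or an independent uniform relative-error result for $\mathbf{F}_n$ in the tail. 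Your fallback remark that consistency follows from the expansion is fine, but only under the additional second-order hypotheses, so it does not prove the first assertion of the theorem as stated.
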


\begin{corollary}
\label{Cor1}If, in addition to the assumptions of Theorem \ref{Theorem2}, we
suppose that $\sqrt{k}\mathbf{A}_{\mathbf{0}}\left(  n/k\right)
\rightarrow\lambda,$ then%
\[
\sqrt{k}\left(  \widehat{\gamma}_{1}-\gamma_{1}\right)  \overset{\mathcal{D}%
}{\rightarrow}\mathcal{N}\left(  \frac{\lambda}{1-\tau_{1}},\sigma^{2}\right)
,\text{ as }n\rightarrow\infty,
\]
where%
\[
\sigma^{2}:=\gamma^{2}\left(  1+\gamma_{1}/\gamma_{2}\right)  \left(
1+\left(  \gamma_{1}/\gamma_{2}\right)  ^{2}\right)  /\left(  1-\gamma
_{1}/\gamma_{2}\right)  ^{3}.
\]

\end{corollary}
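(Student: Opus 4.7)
\medskip

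\noindent\textbf{Proof plan.} The statement is an immediate consequence of Theorem~\ref{Theorem2} combined with Slutsky's lemma, once we identify the limit law of the Gaussian part. The plan has three steps: invoke Theorem~\ref{Theorem2} to peel off the deterministic bias, rewrite the Wiener functional as a single Ito integral, and then compute its variance to match $\sigma^{2}$.

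\medskip

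\noindent\textbf{Step 1 (reduction).} By Theorem~\ref{Theorem2} we have
\[
\sqrt{k}\bigl(\widehat{\gamma}_{1}-\gamma_{1}\bigr)
=\frac{\sqrt{k}\,\mathbf{A}_{0}(n/k)}{1-\tau_{1}}+Z_{n}+o_{\mathbf{P}}(1),
\]
where
\[
Z_{n}:=-\gamma\mathbf{W}(1)+\frac{\gamma}{\gamma_{1}+\gamma_{2}}\int_{0}^{1}h(s)\,\mathbf{W}(s)\,ds,
\qquad h(s):=\bigl(\gamma_{2}-\gamma_{1}-\gamma\log s\bigr)s^{-\gamma/\gamma_{2}-1}.
\]
Under $\sqrt{k}\mathbf{A}_{0}(n/k)\to\lambda$ the bias tends to $\lambda/(1-\tau_{1})$, so by Slutsky it suffices to show that $Z_{n}$ (which does not depend on $n$ under the representation given) is centered Gaussian with variance $\sigma^{2}$. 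Centeredness is clear since every Wiener evaluation is centered.

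\medskip

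\noindent\textbf{Step 2 (Ito-integral representation).} Writing $\mathbf{W}(s)=\int_{0}^{s}d\mathbf{W}(u)$ and applying Fubini on $\{0\le u\le s\le1\}$,
\[
\int_{0}^{1}h(s)\,\mathbf{W}(s)\,ds=\int_{0}^{1}H(u)\,d\mathbf{W}(u),\qquad H(u):=\int_{u}^{1}h(s)\,ds.
\]
Hence $Z=\int_{0}^{1}\psi(u)\,d\mathbf{W}(u)$ with $\psi(u):=-\gamma+\frac{\gamma}{\gamma_{1}+\gamma_{2}}H(u)$, which is a centered Gaussian by Ito isometry. A direct computation using $\int_{u}^{1}s^{-\beta-1}ds=(u^{-\beta}-1)/\beta$ and integration by parts for the logarithmic part, with $\beta:=\gamma/\gamma_{2}$, gives
\[
H(u)=(\gamma_{1}+\gamma_{2})\bigl(1-u^{-\beta}\bigr)-\gamma_{2}u^{-\beta}\log u,
\]
after using the identities $(\gamma_{2}-\gamma_{1})/\beta-\gamma/\beta^{2}=-(\gamma_{1}+\gamma_{2})$ and $\gamma/\beta=\gamma_{2}$. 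Substituting into $\psi$ and using $\gamma\gamma_{2}/(\gamma_{1}+\gamma_{2})=\gamma^{2}/\gamma_{1}$ yields the clean form
\[
\psi(u)=-\gamma\,u^{-\beta}\Bigl(1+\tfrac{\gamma}{\gamma_{1}}\log u\Bigr).
\]

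\medskip

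\noindent\textbf{Step 3 (variance computation).} Setting $c:=\gamma/\gamma_{1}=\gamma_{2}/(\gamma_{1}+\gamma_{2})$ and using the elementary integrals $\int_{0}^{1}u^{-2\beta}du=1/r$, $\int_{0}^{1}u^{-2\beta}\log u\,du=-1/r^{2}$, $\int_{0}^{1}u^{-2\beta}(\log u)^{2}du=2/r^{3}$ with $r:=1-2\beta$, valid since $\gamma_{1}<\gamma_{2}$ gives $r>0$, Ito isometry produces
\[
\mathrm{Var}(Z)=\gamma^{2}\int_{0}^{1}u^{-2\beta}\bigl(1+c\log u\bigr)^{2}du=\gamma^{2}\,\frac{r^{2}-2cr+2c^{2}}{r^{3}}.
\]
Writing $a:=\gamma_{1}/\gamma_{2}$, so that $\beta=a/(1+a)$, $r=(1-a)/(1+a)$ and $c=1/(1+a)$, the numerator simplifies to $(1+a^{2})/(1+a)^{2}$ because
\[
(1-a)^{2}-2(1-a)+2=1+a^{2}.
\]
Dividing by $r^{3}=(1-a)^{3}/(1+a)^{3}$ gives $\mathrm{Var}(Z)=\gamma^{2}(1+a)(1+a^{2})/(1-a)^{3}=\sigma^{2}$, as required.

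\medskip

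\noindent The only nontrivial step is the algebraic bookkeeping in the variance calculation; the passage from a convolution-type Wiener functional to an Ito integral is a standard Fubini argument, and the rest is Slutsky.
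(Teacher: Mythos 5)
Your proof is correct and follows the same overall strategy as the paper (invoke Theorem \ref{Theorem2}, let the bias term converge to $\lambda/(1-\tau_{1})$, and compute the variance of the remaining Gaussian functional), but the variance computation is organized differently. The paper writes the Gaussian part as $\gamma\Delta$ with $\Delta=a\Delta_{1}+b\Delta_{2}-\Delta_{3}$, where $\Delta_{1}=\int_{0}^{1}s^{\rho-2}\mathbf{W}(s)\,ds$, $\Delta_{2}=\int_{0}^{1}s^{\rho-2}\mathbf{W}(s)\log s\,ds$, $\Delta_{3}=\mathbf{W}(1)$, $\rho=1-\gamma/\gamma_{2}$, and then evaluates the six second moments $\mathbf{E}[\Delta_{i}\Delta_{j}]$ via $\mathbf{E}[\mathbf{W}(s)\mathbf{W}(t)]=\min(s,t)$ before substituting $a,b,\rho$; you instead collapse the whole functional into a single Wiener integral $\int_{0}^{1}\psi(u)\,d\mathbf{W}(u)$ with $\psi(u)=-\gamma u^{-\beta}\bigl(1+(\gamma/\gamma_{1})\log u\bigr)$ and apply the It\^{o} isometry, which replaces the covariance bookkeeping by one elementary integral; I checked your identities for $H$, $\psi$, and the final algebra in $a=\gamma_{1}/\gamma_{2}$, and they reproduce $\sigma^{2}$ exactly (and agree with the paper's covariance values). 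The only point you should make explicit is the interchange in Step 2: $\int_{0}^{1}h(s)\mathbf{W}(s)\,ds=\int_{0}^{1}H(u)\,d\mathbf{W}(u)$ is a stochastic Fubini statement, and since $h(s)\asymp s^{-\beta-1}\log(1/s)$ is not square-integrable at $0$ you should note that the relevant condition, e.g. $\int_{0}^{1}|h(s)|\sqrt{s}\,ds<\infty$ (equivalently, that $H\in L^{2}(0,1)$ and the approximating truncated functionals converge in $L^{2}$), holds precisely because $\beta=\gamma/\gamma_{2}=\gamma_{1}/(\gamma_{1}+\gamma_{2})<1/2$ under $\gamma_{1}<\gamma_{2}$ — the same condition you already use for $r>0$. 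With that one-line justification added, your argument is complete and arguably cleaner than the paper's.
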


\begin{remark}
\label{remak3}In the case of complete data we have, from Remark $\ref{remak1}%
,$ $\sigma^{2}\equiv\gamma_{1}^{2}.$\ It follows that $\sqrt{k}\left(
\widehat{\gamma}_{1}-\gamma_{1}\right)  \overset{\mathcal{D}}{\rightarrow
}\mathcal{N}\left(  \lambda/\left(  1-\tau_{1}\right)  ,\gamma_{1}^{2}\right)
,$\ as $n\rightarrow\infty,$\ which meets the asymptotic normality of the
classical Hill estimator \citep{Hill75}, see for instance, Theorem 3.2.5 in
\cite{deHF06}.
\end{remark}

\section{\textbf{Simulation study\label{sec4}}}

\noindent This study, just intended for illustrating the performance of our
estimator, is realized through two sets of truncated and truncation data, both
drawn from Burr's model:%
\[
\overline{\mathbf{F}}\left(  x\right)  =\left(  1+x^{1/\delta}\right)
^{-\delta/\gamma_{1}},\text{ }\overline{\mathbf{G}}\left(  x\right)  =\left(
1+x^{1/\delta}\right)  ^{-\delta/\gamma_{2}},\text{ }x\geq0,
\]
where $\delta,\gamma_{1},\gamma_{2}>0.$ The corresponding percentage of
observed data is equal to $p=\gamma_{2}/(\gamma_{1}+\gamma_{2}).$ We fix
$\delta=1/4$ and choose the values $0.6$ and $0.8$ for $\gamma_{1}$ and
$70\%,$ $80\%$ and $90\%$ for $p.$ For each couple $\left(  \gamma
_{1},p\right)  ,$ we solve the equation $p=\gamma_{2}/(\gamma_{1}+\gamma_{2})$
to get the pertaining $\gamma_{2}$-value. We vary the common size $N$ of both
samples $\left(  \mathbf{X}_{1},...,\mathbf{X}_{N}\right)  $ and $\left(
\mathbf{Y}_{1},...,\mathbf{Y}_{N}\right)  ,$ then for each size, we generate
$1000$ independent replicates. Our overall results are taken as the empirical
means of the results obtained through all repetitions. To determine the
optimal number (that we denote by $k^{\ast})$ of upper order statistics used
in the computation of $\widehat{\gamma}_{1},$ we apply the algorithm of
\cite[page 137]{ReTo7}. The performance of the newly defined estimator, in
terms of absolute bias and root of the mean squared error (rmse) is summarized
in Table \ref{Tab1}, where we see that, as expected, the size of the initial
sample influences the estimation: the larger $N,$ the better the estimation.
On the other hand, we note that the estimation accuracy decreases when the
truncation percentage increases, which seems logical. Finally, we observe that
the estimation of the larger value of the tail index is less precise.%

\begin{table}[h] \centering
$%
\begin{tabular}
[c]{c|cccccccc}\hline
& \multicolumn{8}{c}{${\small p=0.7}$}\\\hline
& \multicolumn{4}{c}{$\gamma_{1}=0.6$} & \multicolumn{4}{c}{$\gamma_{1}=0.8$%
}\\\hline
\multicolumn{1}{l|}{$N$} & $n$ & $k^{\ast}$ & {\small absolute bias} &
{\small rmse} & \multicolumn{1}{|c}{$n$} & $k^{\ast}$ & {\small absolute bias}
& {\small rmse}\\\hline\hline
\multicolumn{1}{l|}{${\small 200}$} & \multicolumn{1}{|r}{$139$} & $8$ &
$0.1811$ & \multicolumn{1}{r||}{$0.4645$} & \multicolumn{1}{r}{$140$} & $9$ &
$0.2485$ & $0.6034$\\
\multicolumn{1}{l|}{${\small 300}$} & \multicolumn{1}{|r}{$210$} & $17$ &
$0.1280$ & \multicolumn{1}{r||}{$0.3451$} & \multicolumn{1}{r}{$209$} & $23$ &
$0.1230$ & $0.5082$\\
\multicolumn{1}{l|}{${\small 500}$} & \multicolumn{1}{|r}{$348$} & $28$ &
$0.1151$ & \multicolumn{1}{r||}{$0.2803$} & \multicolumn{1}{r}{$348$} & $31$ &
$0.1024$ & $0.3708$\\\hline
\multicolumn{1}{l|}{${\small 1000}$} & \multicolumn{1}{|r}{$699$} & $43$ &
$0.0461$ & \multicolumn{1}{r||}{$0.2421$} & \multicolumn{1}{|r}{$700$} & $42$
& $0.0684$ & \multicolumn{1}{r}{$0.3825$}\\\hline
$1500$ & \multicolumn{1}{|r}{$1050$} & $67$ & $0.0212$ &
\multicolumn{1}{r||}{$0.2362$} & \multicolumn{1}{|c}{$1050$} & $57$ & $0.0527$
& $0.2539$\\\hline
$2000$ & \multicolumn{1}{|r}{$1399$} & $95$ & $0.0254$ &
\multicolumn{1}{r||}{$0.2261$} & \multicolumn{1}{|r}{$1401$} & $94$ & $0.0469$
& \multicolumn{1}{r}{$0.2602$}\\\hline
& \multicolumn{8}{c}{${\small p=0.8}$}\\\hline
\multicolumn{1}{l|}{${\small 200}$} & $159$ & $10$ & $0.0759$ & $0.5235$ &
\multicolumn{1}{||c}{$159$} & $11$ & $0.1435$ & $0.6815$\\\hline
\multicolumn{1}{l|}{${\small 300}$} & $240$ & $21$ & $0.0485$ & $0.3647$ &
\multicolumn{1}{||r}{$238$} & $18$ & $0.1052$ & $0.4511$\\\hline
\multicolumn{1}{l|}{${\small 500}$} & $399$ & $34$ & $0.0403$ & $0.2718$ &
\multicolumn{1}{||c}{$400$} & $41$ & $0.0935$ & $0.3170$\\\hline
${\small 1000}$ & $800$ & $55$ & $0.0408$ & $0.1999$ &
\multicolumn{1}{||c}{$800$} & $54$ & $0.0504$ & $0.2881$\\\hline
${\small 1500}$ & $1205$ & $116$ & $0.0562$ & $0.1911$ &
\multicolumn{1}{||c}{$1199$} & $85$ & $0.0428$ & $0.2362$\\\hline
${\small 2000}$ & $1599$ & $117$ & $0.0285$ & $0.1534$ &
\multicolumn{1}{||c}{$1599$} & $117$ & $0.0373$ & $0.2054$\\\hline
& \multicolumn{8}{c}{${\small p=0.9}$}\\\hline
\multicolumn{1}{l|}{${\small 200}$} & \multicolumn{1}{|r}{$180$} & $14$ &
$0.0537$ & $0.5204$ & \multicolumn{1}{||c}{$179$} & $17$ & $0.1098$ &
$0.7531$\\\hline
\multicolumn{1}{l|}{${\small 300}$} & \multicolumn{1}{|r}{$269$} & $24$ &
$0.0388$ & $0.3343$ & \multicolumn{1}{||r}{$268$} & $26$ & $0.0844$ &
$0.3654$\\\hline
\multicolumn{1}{l|}{${\small 500}$} & \multicolumn{1}{|r}{$450$} & $48$ &
$0.0294$ & $0.2869$ & \multicolumn{1}{||r}{$450$} & $49$ & $0.0721$ &
$0.2448$\\\hline
${\small 1000}$ & \multicolumn{1}{|r}{$899$} & $64$ & $0.0359$ & $0.1557$ &
\multicolumn{1}{||r}{$899$} & $67$ & $0.0465$ & $0.2014$\\\hline
${\small 1500}$ & \multicolumn{1}{|r}{$1349$} & $103$ & $0.0171$ & $0.1350$ &
\multicolumn{1}{||r}{$1349$} & $101$ & $0.0385$ & $0.1828$\\\hline
${\small 2000}$ & \multicolumn{1}{|r}{$1799$} & $144$ & $0.0188$ & $0.1107$ &
\multicolumn{1}{||r}{$1799$} & $145$ & $0.0251$ & $0.1466$\\\hline
\end{tabular}
\ $\medskip
\caption{Absolute bias and rmse of the tail index estimator based on 1000 right-truncated samples of Burr models.}\label{Tab1}%
\end{table}%

\section{\textbf{Proofs\label{sec5}}}

\subsection{Proof of Theorem \ref{Theorem1}}

\noindent Set $U_{i}:=\overline{F}\left(  X_{i}\right)  $ and define the
corresponding uniform tail empirical process by $\alpha_{n}\left(  s\right)
:=\sqrt{k}\left(  \mathbf{U}_{n}\left(  s\right)  -s\right)  ,$ for $0\leq
s\leq1,$ where $\mathbf{U}_{n}\left(  s\right)  :=k^{-1}\sum_{i=1}%
^{n}\mathbf{1}\left(  U_{i}<ks/n\right)  .$ The weighted weak approximation to
$\alpha_{n}\left(  s\right)  $ given in terms of, either a sequence of Wiener
processes (see, e.g., \citeauthor{E-1992}, \citeyear{E-1992} and
\citeauthor{DDL-2006}, \citeyear{DDL-2006})\textbf{\ }or a single Wiener
process as in Proposition 3.1 of \cite{EHL-2006}, will be very crucial to our
proof procedure. In the sequel, we use the latter representation which says
that: there exists a Wiener process $\mathbf{W},$ such that for every
$0\leq\eta<1/2,$%
\begin{equation}
\sup_{0<s\leq1}s^{-\eta}\left\vert \alpha_{n}\left(  s\right)  -\mathbf{W}%
\left(  s\right)  \right\vert \overset{\mathbf{p}}{\rightarrow}0,\text{ as
}n\rightarrow\infty. \label{approx}%
\end{equation}
We begin by fixing $x_{0}>0,$ then we decompose $k^{-1/2}\mathbf{D}_{n}\left(
x\right)  ,$ for $x\geq x_{0},$ as the sum of the following four terms:%
\[
\mathbf{M}_{n1}\left(  x\right)  :=x^{-1/\gamma_{1}}\frac{\overline
{\mathbf{F}}_{n}\left(  xX_{n-k:n}\right)  -\overline{\mathbf{F}}\left(
xX_{n-k:n}\right)  }{\overline{\mathbf{F}}\left(  xX_{n-k:n}\right)
},\medskip
\]%
\[
\mathbf{M}_{n2}\left(  x\right)  :=-\frac{\overline{\mathbf{F}}\left(
xX_{n-k:n}\right)  }{\overline{\mathbf{F}}_{n}\left(  X_{n-k:n}\right)  }%
\frac{\overline{\mathbf{F}}_{n}\left(  X_{n-k:n}\right)  -\overline
{\mathbf{F}}\left(  X_{n-k:n}\right)  }{\overline{\mathbf{F}}\left(
X_{n-k:n}\right)  },\medskip
\]%
\[
\mathbf{M}_{n3}\left(  x\right)  :=\left(  \frac{\overline{\mathbf{F}}\left(
xX_{n-k:n}\right)  }{\overline{\mathbf{F}}_{n}\left(  X_{n-k:n}\right)
}-x^{-1/\gamma_{1}}\right)  \frac{\overline{\mathbf{F}}_{n}\left(
xX_{n-k:n}\right)  -\overline{\mathbf{F}}\left(  xX_{n-k:n}\right)
}{\overline{\mathbf{F}}\left(  xX_{n-k:n}\right)  }\medskip
\]
and%
\[
\mathbf{M}_{n4}\left(  x\right)  :=\frac{\overline{\mathbf{F}}\left(
xX_{n-k:n}\right)  }{\overline{\mathbf{F}}\left(  X_{n-k:n}\right)
}-x^{-1/\gamma_{1}}.
\]
In order to establish the result of the theorem, we will successively show
that, under the first-order of regular variation conditions, we have uniformly
on $x\geq x_{0},$ for $\gamma/\gamma_{2}<\eta<1/2$ and $\epsilon>0$
sufficiently small%
\begin{align*}
&  x^{1/\gamma_{1}}\sqrt{k}\mathbf{M}_{n1}\left(  x\right) \\
&  =x^{1/\gamma}\left\{  \frac{\gamma}{\gamma_{1}}\mathbf{W}\left(
x^{-1/\gamma}\right)  +\frac{\gamma}{\gamma_{1}+\gamma_{2}}\int_{0}%
^{1}t^{-\gamma/\gamma_{2}-1}\mathbf{W}\left(  x^{-1/\gamma}t\right)
dt\right\}  +O_{\mathbf{p}}\left(  \epsilon\right)  x^{\left(  1-\eta\right)
/\gamma\pm\epsilon},\medskip
\end{align*}%
\[
x^{1/\gamma_{1}}\sqrt{k}\mathbf{M}_{n2}\left(  x\right)  =-\left\{
\frac{\gamma}{\gamma_{1}}\mathbf{W}\left(  1\right)  +\frac{\gamma}{\gamma
_{1}+\gamma_{2}}\int_{0}^{1}t^{-\gamma/\gamma_{2}-1}\mathbf{W}\left(
t\right)  dt\right\}  +O_{\mathbf{p}}\left(  \epsilon\right)  x^{\pm\epsilon
},\medskip
\]
and%
\[
x^{1/\gamma_{1}}\sqrt{k}\mathbf{M}_{n3}\left(  x\right)  =O_{\mathbf{p}%
}\left(  \epsilon\right)  x^{-1/\gamma_{1}+\left(  1-\eta\right)  /\gamma
\pm\epsilon}.
\]
Moreover, if we assume the second-order condition we will show that%
\[
x^{1/\gamma_{1}}\sqrt{k}\mathbf{M}_{n4}\left(  x\right)  =\left(
1+o_{\mathbf{p}}\left(  1\right)  \right)  \dfrac{x^{\tau_{1}/\gamma_{1}}%
-1}{\gamma_{1}\tau_{1}}\sqrt{k}\mathbf{A}_{0}\left(  n/k\right)  .
\]
Here $O_{\mathbf{p}}$ and $o_{\mathbf{p}}$ stand for the usual stochastic
order symbols. For convenience, let $a_{k}:=\mathbb{U}_{F}\left(  n/k\right)
$ and recall that $\mathbb{U}_{F}$ is regularly varying (with index $\gamma).$
Then by combining Corollary 2.2.2 with Proposition B.1.10 in \cite{deHF06}, we
show that $X_{n-k:n}/a_{k}\overset{\mathbf{P}}{\rightarrow}1$ as
$n\rightarrow\infty,$ which implies, due to the regular variation of
$\overline{\mathbf{F}},$ that $\overline{\mathbf{F}}\left(  xa_{k}\right)
/\overline{\mathbf{F}}\left(  xX_{n-k:n}\right)  $ $=1+o_{\mathbf{p}}\left(
1\right)  $ and therefore%
\begin{equation}
\mathbf{M}_{n1}\left(  x\right)  =\left(  1+o_{\mathbf{p}}\left(  1\right)
\right)  \mathbf{M}_{n1}^{\ast}\left(  x\right)  , \label{Mn1}%
\end{equation}
where%
\[
\mathbf{M}_{n1}^{\ast}\left(  x\right)  :=x^{-1/\gamma_{1}}\frac
{\overline{\mathbf{F}}_{n}\left(  xX_{n-k:n}\right)  -\overline{\mathbf{F}%
}\left(  xX_{n-k:n}\right)  }{\overline{\mathbf{F}}\left(  xa_{k}\right)  }.
\]
Now, observe that, in view of equation $(\ref{df}),$ we may write%
\[
\mathbf{F}\left(  x\right)  =\exp-\Lambda\left(  x\right)  \text{ and
}\mathbf{F}_{n}\left(  x\right)  =\exp-\Lambda_{n}\left(  x\right)  ,
\]
where $\Lambda\left(  x\right)  $ and its empirical counterpart $\Lambda
_{n}\left(  x\right)  $ are defined by $\int_{x}^{\infty}dF\left(  z\right)
/C\left(  z\right)  $ and $\int_{x}^{\infty}dF_{n}\left(  z\right)
/C_{n}\left(  z\right)  $ respectively. Note that $\overline{\mathbf{F}}%
_{n}\left(  xX_{n-k:n}\right)  ,$ $\overline{\mathbf{F}}\left(  xX_{n-k:n}%
\right)  $ and $\overline{\mathbf{F}}\left(  xa_{k}\right)  $ tend to zero in
probability, uniformly on $x\geq x_{0},$ it follows that $\Lambda_{n}\left(
xX_{n-k:n}\right)  ,$ $\Lambda\left(  xX_{n-k:n}\right)  $ and $\Lambda\left(
xa_{k}\right)  $ go to zero in probability as well. Using the approximation
$1-\exp(-t)\sim t,$ as $t\downarrow0,$ we may write%
\[
x^{1/\gamma_{1}}\mathbf{M}_{n1}^{\ast}\left(  x\right)  =\left(
1+o_{\mathbf{p}}\left(  1\right)  \right)  \frac{\Lambda_{n}\left(
xX_{n-k:n}\right)  -\Lambda\left(  xX_{n-k:n}\right)  }{\Lambda\left(
xa_{k}\right)  }.
\]
Next, we provide a Gaussian approximation to the expression%
\[
\sqrt{k}\dfrac{\Lambda_{n}\left(  xX_{n-k:n}\right)  -\Lambda\left(
xX_{n-k:n}\right)  }{\Lambda\left(  xa_{k}\right)  },
\]
then we deduce one to $\sqrt{k}x^{1/\gamma_{1}}\mathbf{M}_{n1}^{\ast}\left(
x\right)  .$ For this, we decompose the difference $\Lambda_{n}\left(
xX_{n-k:n}\right)  -\Lambda\left(  xX_{n-k:n}\right)  $ in the sum of%
\[
S_{n1}\left(  x\right)  :=-\int_{xa_{k}}^{\infty}\frac{d\left(  \overline
{F}_{n}\left(  z\right)  -\overline{F}\left(  z\right)  \right)  }{C\left(
z\right)  },
\]%
\[
S_{n2}\left(  x\right)  :=-\int_{xX_{n-k:n}}^{\infty}\left\{  \frac{1}%
{C_{n}\left(  z\right)  }-\frac{1}{C\left(  z\right)  }\right\}  d\overline
{F}_{n}\left(  z\right)  ,
\]
and%
\[
S_{n3}\left(  x\right)  :=-\int_{xX_{n-k:n}}^{xa_{k}}\frac{d\left(
\overline{F}_{n}\left(  z\right)  -\overline{F}\left(  z\right)  \right)
}{C\left(  z\right)  }.
\]
For the first term, we use the fact that $\overline{F}_{n}\left(  z\right)
=0$ for $z\geq X_{n:n},$ to write, after an integration by parts and a change
of variables, $S_{n1}\left(  x\right)  =S_{n1}^{\left(  1\right)  }\left(
x\right)  -S_{n1}^{\left(  2\right)  }\left(  x\right)  ,$ with%
\[
S_{n1}^{\left(  1\right)  }\left(  x\right)  :=\frac{\overline{F}_{n}\left(
a_{k}x\right)  -\overline{F}\left(  a_{k}x\right)  }{C\left(  a_{k}x\right)
}\text{ and }S_{n1}^{\left(  2\right)  }\left(  x\right)  :=\int_{x}^{\infty
}\frac{\overline{F}_{n}\left(  a_{k}z\right)  -\overline{F}\left(
a_{k}z\right)  }{C^{2}\left(  a_{k}z\right)  }dC\left(  a_{k}z\right)  .
\]
It is easy to verify that $\overline{F}_{n}\left(  xa_{k}\right)
-\overline{F}\left(  xa_{k}\right)  =\dfrac{\sqrt{k}}{n}\alpha_{n}\left(
\dfrac{n}{k}\overline{F}\left(  xa_{k}\right)  \right)  ,$ it follows that%
\[
\frac{\sqrt{k}S_{n1}^{\left(  1\right)  }\left(  x\right)  }{\Lambda\left(
a_{k}x\right)  }=d_{n}\left(  x\right)  \alpha_{n}\left(  \frac{n}{k}%
\overline{F}\left(  a_{k}x\right)  \right)  ,
\]
where $d_{n}\left(  x\right)  :=\dfrac{k/n}{\Lambda\left(  xa_{k}\right)
C\left(  a_{k}x\right)  }.$ From Lemma $\ref{Lemma1}$ $\left(  iii\right)
,$\ we have%
\begin{equation}
d_{n}\left(  x\right)  =\left(  \gamma/\gamma_{1}\right)  x^{1/\gamma
}+O\left(  \epsilon\right)  x^{1/\gamma\pm\epsilon}, \label{dn}%
\end{equation}
as $n\rightarrow\infty,$ uniformly on $x\geq x_{0},$ it follows that%
\[
\frac{\sqrt{k}S_{n1}^{\left(  1\right)  }\left(  x\right)  }{\Lambda\left(
a_{k}x\right)  }=\left\{  \left(  \gamma/\gamma_{1}\right)  x^{1/\gamma
}+O_{\mathbf{p}}\left(  \epsilon\right)  x^{1/\gamma\pm\epsilon}\right\}
\alpha_{n}\left(  \frac{n}{k}\overline{F}\left(  a_{k}x\right)  \right)  .
\]
On the other hand, for $0<\eta<1/2,$ the sequence of rv's $\sup_{0<s\leq
1}\left\vert \alpha_{n}\left(  s\right)  \right\vert /s^{\eta}$ is
stochastically bounded. This comes from the inequality%
\[
\sup_{0<s\leq1}s^{-\eta}\left\vert \alpha_{n}\left(  s\right)  \right\vert
\leq\sup_{0<s\leq1}s^{-\eta}\left\vert \alpha_{n}\left(  s\right)
-\mathbf{W}\left(  s\right)  \right\vert +\sup_{0<s\leq1}s^{-\eta}\left\vert
\mathbf{W}\left(  s\right)  \right\vert ,
\]
with approximation $\left(  \ref{approx}\right)  $ and the fact $\sup
_{0<s\leq1}s^{-\eta}\left\vert \mathbf{W}\left(  s\right)  \right\vert
=O_{\mathbf{p}}\left(  1\right)  $
\citep[see, e.g., Lemma 3.2 in][]{EHL-2006}. Now, let $\epsilon>0$ be
sufficiently small. Then, by applying Potter's inequalities to $\overline{F}$
\citep[see, e.g., Proposition B.1.9, assertion 5  in][]{deHF06}, we write
$\dfrac{n}{k}\overline{F}\left(  a_{k}x\right)  \leq\left(  1+\epsilon\right)
x^{-1/\gamma\pm\epsilon},$ it follows that $\alpha_{n}\left(  \dfrac{n}%
{k}\overline{F}\left(  a_{k}x\right)  \right)  =O_{\mathbf{p}}\left(
x^{-\eta/\gamma\pm\eta\epsilon}\right)  .$ For notational simplicity and
without loss of generality, we attribute $\epsilon$ to any constant times
$\epsilon$ and $v^{\pm\epsilon}$ to any linear combinations of $v^{\pm
c_{1}\epsilon}$ and $v^{\pm c_{2}\epsilon},\ $for every $c_{1},c_{2}>0.$
Therefore%
\[
\frac{\sqrt{k}S_{n1}^{\left(  1\right)  }\left(  x\right)  }{\Lambda\left(
a_{k}x\right)  }=\frac{\gamma}{\gamma_{1}}x^{1/\gamma}\alpha_{n}\left(
\frac{n}{k}\overline{F}\left(  a_{k}x\right)  \right)  +O_{\mathbf{p}}\left(
\epsilon\right)  x^{\left(  1-\eta\right)  /\gamma\pm\epsilon}.
\]
For $S_{n1}^{\left(  2\right)  }\left(  x\right)  ,$ let us write%
\[
\frac{\sqrt{k}S_{n1}^{\left(  2\right)  }\left(  x\right)  }{\Lambda\left(
a_{k}x\right)  }=d_{n}\left(  x\right)  \frac{C\left(  a_{k}x\right)
}{C\left(  a_{k}\right)  }\int_{x}^{\infty}\dfrac{C^{2}\left(  a_{k}\right)
}{C^{2}\left(  a_{k}z\right)  }\alpha_{n}\left(  \dfrac{n}{k}\overline
{F}\left(  a_{k}z\right)  \right)  d\dfrac{C\left(  a_{k}z\right)  }{C\left(
a_{k}\right)  }.
\]
From Lemma $\ref{Lemma1}$ $\left(  i\right)  ,$ the function $C$ is regularly
varying at infinity with index $\left(  -1/\gamma_{2}\right)  ,$ as
$\overline{G}$ is, this implies that $C\left(  xa_{k}\right)  /C\left(
a_{k}\right)  =x^{-1/\gamma_{2}}+O\left(  \epsilon\right)  x^{-1/\gamma_{2}%
\pm\epsilon}.$ Then by using $\left(  \ref{dn}\right)  ,$ we get%
\begin{equation}
d_{n}\left(  x\right)  \frac{C\left(  a_{k}x\right)  }{C\left(  a_{k}\right)
}=\left(  \gamma/\gamma_{1}\right)  x^{1/\gamma_{1}}+O\left(  \epsilon\right)
x^{1/\gamma_{1}\pm\epsilon},\text{ as }n\rightarrow\infty. \label{dn2}%
\end{equation}
For convenience, we set $\sqrt{k}S_{n1}^{\left(  2\right)  }\left(  x\right)
/\Lambda\left(  a_{k}x\right)  =\left\{  1+O\left(  \epsilon\right)
x^{\pm\epsilon}\right\}  \mathcal{T}_{n}\left(  x\right)  ,$ where%
\[
\mathcal{T}_{n}\left(  x\right)  :=\frac{\gamma}{\gamma_{1}}x^{1/\gamma_{1}%
}\int_{x}^{\infty}\dfrac{C^{2}\left(  a_{k}\right)  }{C^{2}\left(
a_{k}z\right)  }\alpha_{n}\left(  \dfrac{n}{k}\overline{F}\left(
a_{k}z\right)  \right)  d\dfrac{C\left(  a_{k}z\right)  }{C\left(
a_{k}\right)  },
\]
which we decompose in the sum of%
\[
I_{n}\left(  x\right)  :=\frac{\gamma}{\gamma_{1}}x^{1/\gamma_{1}}\int
_{x}^{\infty}\dfrac{C^{2}\left(  a_{k}\right)  }{C^{2}\left(  a_{k}z\right)
}\alpha_{n}\left(  \dfrac{n}{k}\overline{F}\left(  a_{k}z\right)  \right)
d\dfrac{F\left(  a_{k}z\right)  }{C\left(  a_{k}\right)  },
\]%
\[
J_{n}\left(  x\right)  :=-\frac{\gamma}{\gamma_{1}}x^{1/\gamma_{1}}\int
_{x}^{\infty}\left\{  \dfrac{C^{2}\left(  a_{k}\right)  }{C^{2}\left(
a_{k}z\right)  }-z^{2/\gamma_{2}}\right\}  \alpha_{n}\left(  \dfrac{n}%
{k}\overline{F}\left(  a_{k}z\right)  \right)  d\dfrac{G\left(  a_{k}z\right)
}{C\left(  a_{k}\right)  },
\]
and%
\[
K_{n}\left(  x\right)  :=\frac{\gamma}{\gamma_{1}}x^{1/\gamma_{1}}\int
_{x}^{\infty}z^{2/\gamma_{2}}\alpha_{n}\left(  \dfrac{n}{k}\overline{F}\left(
a_{k}z\right)  \right)  d\dfrac{\overline{G}\left(  a_{k}z\right)  }{C\left(
a_{k}\right)  }.
\]
Recall that $a_{k}\rightarrow\infty,$ $C\left(  a_{k}\right)  \sim\overline
{G}\left(  a_{k}\right)  $ and $\overline{F}\left(  a_{k}\right)  =o\left(
\overline{G}\left(  a_{k}\right)  \right)  $ as $n\rightarrow\infty.$ On the
other hand, by using, once again, Potter's inequalities to $C,$ (regularly
varying at infinity with index $-1/\gamma_{2}),$ we write, for all large $n$
and $z\geq x,$%
\begin{equation}
\left(  1-\epsilon\right)  z^{-1/\gamma_{2}}\min\left(  z^{\epsilon
},z^{-\epsilon}\right)  \leq\dfrac{C\left(  a_{k}z\right)  }{C\left(
a_{k}\right)  }\leq\left(  1+\epsilon\right)  z^{-1/\gamma_{2}}\max\left(
z^{\epsilon},z^{-\epsilon}\right)  . \label{C_inequa1}%
\end{equation}
It is clear this implies that $C^{2}\left(  a_{k}\right)  /C^{2}\left(
a_{k}z\right)  \leq\left(  1-\epsilon\right)  ^{-2}z^{2/\gamma_{2}\pm
2\epsilon}.$ In view of the stochastic boundedness of $\sup_{0<s\leq
1}\left\vert \alpha_{n}\left(  s\right)  \right\vert /s^{\eta}$ and the fact
that $\dfrac{n}{k}\overline{F}\left(  a_{k}z\right)  \leq\left(
1+\epsilon\right)  z^{-1/\gamma\pm\epsilon},$ we have%
\[
I_{n}\left(  x\right)  =o_{\mathbf{p}}\left(  1\right)  x^{1/\gamma_{1}}%
\int_{x}^{\infty}z^{2/\gamma_{2}\mp2\epsilon}\left(  z^{-1/\gamma\pm\epsilon
}\right)  ^{\eta}d\dfrac{\overline{F}\left(  a_{k}z\right)  }{\overline
{F}\left(  a_{k}\right)  }.
\]
Integrating by parts, we readily get $I_{n}\left(  x\right)  =o_{\mathbf{p}%
}\left(  1\right)  x^{1/\gamma_{2}-\eta/\gamma\pm\epsilon}=o_{\mathbf{p}%
}\left(  1\right)  x^{\left(  1-\eta\right)  /\gamma\pm\epsilon}.$ Let us now
consider $J_{n}\left(  x\right)  .$ From Proposition B.1.10 in \cite{deHF06},
we have $\left\vert C\left(  a_{k}z\right)  /C\left(  a_{k}\right)
-z^{-1/\gamma_{2}}\right\vert \leq\epsilon z^{-1/\gamma_{2}\pm\epsilon}.$
Applying the mean value theorem, then combining this inequality with $\left(
\ref{C_inequa1}\right)  ,$ yield%
\[
\left\vert \dfrac{C^{2}\left(  a_{k}\right)  }{C^{2}\left(  a_{k}z\right)
}-z^{2/\gamma_{2}}\right\vert \leq\epsilon\frac{2\left(  z^{\pm\epsilon
}+1\right)  }{\left(  1-\epsilon\right)  ^{3}}z^{2/\gamma_{2}\pm\epsilon}.
\]
Similar arguments as the above lead to $J_{n}\left(  x\right)  =O_{\mathbf{p}%
}\left(  \epsilon\right)  x^{\left(  1-\eta\right)  /\gamma\pm\epsilon}.$ Now,
we focus on $K_{n}\left(  x\right)  .$ Since $C\left(  a_{k}\right)
\sim\overline{G}\left(  a_{k}\right)  ,$ then%
\[
K_{n}\left(  x\right)  =\left(  1+o_{\mathbf{p}}\left(  1\right)  \right)
\frac{\gamma}{\gamma_{1}}x^{1/\gamma_{1}}\int_{x}^{\infty}z^{2/\gamma_{2}%
}\alpha_{n}\left(  \dfrac{n}{k}\overline{F}\left(  a_{k}z\right)  \right)
d\dfrac{\overline{G}\left(  a_{k}z\right)  }{\overline{G}\left(  a_{k}\right)
}.
\]
Let $G^{\leftarrow}$ denote the quantile function pertaining to df $G$ and use
the change of variables $z=G^{\leftarrow}\left(  1-s\overline{G}\left(
a_{k}\right)  \right)  /a_{k}$ to get%
\[
K_{n}\left(  x\right)  =-\left(  1+o_{\mathbf{p}}\left(  1\right)  \right)
\frac{\gamma}{\gamma_{1}}x^{1/\gamma_{1}}\int_{0}^{\tfrac{\overline{G}\left(
a_{k}x\right)  }{\overline{G}\left(  a_{k}\right)  }}\left(  \frac
{G^{\leftarrow}\left(  1-s\overline{G}\left(  a_{k}\right)  \right)  }{a_{k}%
}\right)  ^{2/\gamma_{2}}\alpha_{n}\left(  \ell_{n}\left(  s\right)  \right)
ds,
\]
where $\ell_{n}\left(  s\right)  :=\dfrac{n}{k}\overline{F}\left(
G^{\leftarrow}\left(  1-s\overline{G}\left(  a_{k}\right)  \right)  \right)
.$ It is easy to check that
\[
K_{n}\left(  x\right)  =-\left(  1+o_{\mathbf{p}}\left(  1\right)  \right)
\sum_{i=1}^{3}K_{ni}\left(  x\right)  ,
\]
where%
\[
K_{n1}\left(  x\right)  :=\frac{\gamma}{\gamma_{1}}x^{\frac{1}{\gamma_{1}}%
}\int_{0}^{\tfrac{\overline{G}\left(  a_{k}x\right)  }{\overline{G}\left(
a_{k}\right)  }}\left\{  \left(  \frac{G^{\leftarrow}\left(  1-s\overline
{G}\left(  a_{k}\right)  \right)  }{a_{k}}\right)  ^{2/\gamma_{2}}%
-s^{-2}\right\}  \alpha_{n}\left(  \ell_{n}\left(  s\right)  \right)  ds,
\]%
\[
K_{n2}\left(  x\right)  :=\frac{\gamma}{\gamma_{1}}x^{1/\gamma_{1}}%
\int_{x^{-1/\gamma_{2}}}^{\tfrac{\overline{G}\left(  a_{k}x\right)
}{\overline{G}\left(  a_{k}\right)  }}s^{-2}\alpha_{n}\left(  \ell_{n}\left(
s\right)  \right)  ds,
\]
and%
\[
K_{n3}\left(  x\right)  :=\frac{\gamma}{\gamma_{1}}x^{1/\gamma_{1}}\int
_{0}^{x^{-1/\gamma_{2}}}s^{-2}\alpha_{n}\left(  \ell_{n}\left(  s\right)
\right)  ds.
\]
By routine manipulations and similar arguments based on stochastic boundedness
of $\sup_{0<s\leq1}\left\vert \alpha_{n}\left(  s\right)  \right\vert
/s^{\eta}$ and the aforementioned Proposition B.1.10 applied to the regularly
varying functions $\overline{G}$ and $G^{\leftarrow}\left(  1-\cdot\right)  ,$
we show that $K_{ni}\left(  x\right)  =O_{\mathbf{p}}\left(  \epsilon\right)
x^{\left(  1-\eta\right)  /\gamma\mp\epsilon},$ $i=1,2$ and $K_{n3}\left(
x\right)  =O_{\mathbf{p}}\left(  1\right)  x^{\left(  1-\eta\right)
/\gamma\mp\epsilon},$ therefore we omit the details. Up to this stage, we have
shown that $\mathcal{T}_{n}\left(  x\right)  =O_{\mathbf{p}}\left(  1\right)
x^{\left(  1-\eta\right)  /\gamma\mp\epsilon}.$\ It follows that
\[
\frac{\sqrt{k}S_{n1}^{\left(  2\right)  }\left(  x\right)  }{\Lambda\left(
a_{k}x\right)  }=\mathcal{T}_{n}\left(  x\right)  +O_{\mathbf{p}}\left(
\epsilon\right)  x^{\left(  1-\eta\right)  /\gamma\mp\epsilon},
\]
which, after gathering the components of $\mathcal{T}_{n}\left(  x\right)  ,$
is equal to%
\[
\frac{\gamma}{\gamma_{1}}x^{1/\gamma_{1}}\int_{0}^{x^{-1/\gamma_{2}}}%
s^{-2}\alpha_{n}\left(  \ell_{n}\left(  s\right)  \right)  ds+O_{\mathbf{p}%
}\left(  \epsilon\right)  x^{\left(  1-\eta\right)  /\gamma\mp\epsilon}.
\]
Therefore%
\begin{align*}
\frac{\sqrt{k}S_{n1}\left(  x\right)  }{\Lambda\left(  a_{k}x\right)  }  &
=\frac{\gamma}{\gamma_{1}}x^{1/\gamma}\alpha_{n}\left(  \frac{n}{k}%
\overline{F}\left(  a_{k}x\right)  \right) \\
&  -\frac{\gamma}{\gamma_{1}}x^{1/\gamma_{1}}\int_{0}^{x^{-1/\gamma_{2}}%
}s^{-2}\alpha_{n}\left(  \ell_{n}\left(  s\right)  \right)  ds+O_{\mathbf{p}%
}\left(  \epsilon\right)  x^{\left(  1-\eta\right)  /\gamma\mp\epsilon}.
\end{align*}
Recall that $\gamma_{1}<\gamma_{2}$ and $\gamma/\gamma_{2}=\gamma_{1}/\left(
\gamma_{1}+\gamma_{2}\right)  ,$ then we may choose the constant $\eta$ in
such a way that $\gamma/\gamma_{2}<\eta<1/2.$ Making use of weak
approximation\ $(\ref{approx}),$ we obtain%
\begin{align*}
&  \frac{\sqrt{k}S_{n1}\left(  x\right)  }{\Lambda\left(  a_{k}x\right)  }\\
&  =\frac{\gamma}{\gamma_{1}}x^{1/\gamma}\mathbf{W}\left(  \frac{n}%
{k}\overline{F}\left(  a_{k}x\right)  \right)  +\frac{\gamma}{\gamma_{1}%
}x^{1/\gamma}%
{\displaystyle\int_{0}^{x^{-1/\gamma_{2}}}}
s^{-2}\mathbf{W}\left(  \ell_{n}\left(  s\right)  \right)  ds+O_{\mathbf{p}%
}\left(  \epsilon\right)  x^{\left(  1-\eta\right)  /\gamma\mp\epsilon}.
\end{align*}
Note that $k/n=\overline{F}\left(  G^{\leftarrow}\left(  1-\overline{G}\left(
a_{k}\right)  \right)  \right)  ,$ hence%
\[
\ell_{n}\left(  s\right)  =\frac{\overline{F}\left(  G^{\leftarrow}\left(
1-s\overline{G}\left(  a_{k}\right)  \right)  \right)  }{\overline{F}\left(
G^{\leftarrow}\left(  1-\overline{G}\left(  a_{k}\right)  \right)  \right)
}.
\]
Since $s\rightarrow\overline{F}\circ G^{\leftarrow}\left(  1-s\right)  $ is
regularly varying at infinity with index $\gamma_{2}/\gamma,$ then, from
Proposition B.1.10 in \cite{deHF06}, we have for all large $n$%
\begin{equation}
\omega_{n}\left(  s\right)  :=\left\vert \ell_{n}\left(  s\right)
-s^{\gamma_{2}/\gamma}\right\vert \leq\epsilon s^{\gamma_{2}/\gamma\pm
\epsilon}. \label{ln}%
\end{equation}
Recall that $x_{0}>0$ is fixed, then $\sup_{x\geq x_{0}}\sup_{0<s\leq
x^{-1/\gamma_{2}}}\omega_{n}\left(  s\right)  \rightarrow0,$ as $n\rightarrow
\infty.$ On the other hand, by using Levy's modulus of continuity of the
Wiener process \citep[see, e.g., Theorem 1.1.1 in][]{CR-81}, we have%
\[
\left\vert \mathbf{W}\left(  \ell_{n}\left(  s\right)  \right)  -\mathbf{W}%
\left(  s^{\gamma_{2}/\gamma}\right)  \right\vert \leq2\sqrt{\omega_{n}\left(
s\right)  \log\left(  1/\omega_{n}\left(  s\right)  \right)  },
\]
uniformly on $s\geq x^{-1/\gamma_{2}},$ almost surely. By using the fact that,
$\log u<\epsilon u^{-\epsilon}$ as $u\downarrow0,$ together with inequality
$(\ref{ln}),$ we get $\left\vert \mathbf{W}\left(  \ell_{n}\left(  s\right)
\right)  -\mathbf{W}\left(  s^{\gamma_{2}/\gamma}\right)  \right\vert
\leq2\epsilon s^{\left(  \gamma_{2}/\gamma\right)  \left(  1-\epsilon\right)
/2}.$ Following our convention, we may write that $\left(  \gamma_{2}%
/\gamma\pm\epsilon\right)  \left(  1-\epsilon/2\right)  \equiv\gamma
_{2}/\gamma\pm\epsilon.$ Since $\gamma_{1}<\gamma_{2}$ then $\gamma
_{2}/\left(  2\gamma\right)  >1$ and after elementary calculation, we show
that uniformly on $x\geq x_{0}$%
\[
\frac{\gamma}{\gamma_{1}}x^{1/\gamma_{1}}\int_{0}^{x^{-1/\gamma_{2}}}%
s^{-2}\mathbf{W}\left(  \ell_{n}\left(  s\right)  \right)  ds=\frac{\gamma
}{\gamma_{1}}x^{1/\gamma_{1}}\int_{0}^{x^{-1/\gamma_{2}}}s^{-2}\mathbf{W}%
\left(  s^{\gamma_{2}/\gamma}\right)  ds+O_{\mathbf{p}}\left(  \epsilon
\right)  x^{1/\left(  2\gamma\right)  \pm\epsilon}.
\]
By similar arguments, we get%
\[
\frac{\gamma}{\gamma_{1}}x^{1/\gamma}\mathbf{W}\left(  \frac{n}{k}\overline
{F}\left(  a_{k}x\right)  \right)  =\frac{\gamma}{\gamma_{1}}x^{1/\gamma
}\mathbf{W}\left(  x^{-1/\gamma}\right)  +O_{\mathbf{p}}\left(  \epsilon
\right)  x^{1/\left(  2\gamma\right)  \pm\epsilon}.
\]
It is obvious that $O_{\mathbf{p}}\left(  \epsilon\right)  x^{1/\left(
2\gamma\right)  \pm\epsilon}+O_{\mathbf{p}}\left(  \epsilon\right)  x^{\left(
1-\eta\right)  /\gamma\pm\epsilon}=O_{\mathbf{p}}\left(  \epsilon\right)
x^{\left(  1-\eta\right)  /\gamma\pm\epsilon},$ it follows that
\[
\frac{\sqrt{k}S_{n1}\left(  x\right)  }{\Lambda\left(  a_{k}x\right)  }%
=\frac{\gamma}{\gamma_{1}}x^{1/\gamma}\mathbf{W}\left(  x^{-1/\gamma}\right)
+\frac{\gamma}{\gamma_{1}}x^{1/\gamma_{1}}\int_{0}^{x^{-1/\gamma_{2}}}%
s^{-2}\mathbf{W}\left(  s^{\gamma_{2}/\gamma}\right)  ds+O_{\mathbf{p}}\left(
\epsilon\right)  x^{\left(  1-\eta\right)  /\gamma\pm\epsilon}.
\]
After a change of variables, this may be rewritten into%
\begin{align}
&  \frac{\sqrt{k}S_{n1}\left(  x\right)  }{\Lambda\left(  a_{k}x\right)
}\label{Sn1}\\
&  =\frac{\gamma}{\gamma_{1}}x^{1/\gamma}\mathbf{W}\left(  x^{-1/\gamma
}\right)  +\frac{\gamma}{\gamma_{1}+\gamma_{2}}x^{1/\gamma}\int_{0}%
^{1}t^{-\gamma/\gamma_{2}-1}\mathbf{W}\left(  x^{-1/\gamma}t\right)
dt+O_{\mathbf{p}}\left(  \epsilon\right)  x^{\left(  1-\eta\right)  /\gamma
\pm\epsilon}.\nonumber
\end{align}
Now, we consider the second term $S_{n2}\left(  x\right)  .\ $We have
$\overline{F}_{n}\left(  z\right)  =0,$ for $z\geq X_{n:n},$ thus%
\[
S_{n2}\left(  x\right)  =\int_{xX_{n-k:n}}^{X_{n:n}}\frac{C_{n}\left(
z\right)  -C\left(  z\right)  }{C_{n}\left(  z\right)  C\left(  z\right)
}d\overline{F}_{n}\left(  z\right)  .
\]
Therefore%
\[
\left\vert S_{n2}\left(  x\right)  \right\vert \leq\theta_{n}\int_{xX_{n-k:n}%
}^{\infty}\frac{\left\vert C_{n}\left(  z\right)  -C\left(  z\right)
\right\vert }{C^{2}\left(  z\right)  }dF_{n}\left(  z\right)  ,
\]
where $\theta_{n}:=\sup_{X_{1:n}\leq z\leq X_{n:n}}\left\{  C\left(  z\right)
/C_{n}\left(  z\right)  \right\}  ,$ which is stochastically bounded
\citep[see, e.g.,][]{sw2008}. By recalling that $C=\overline{G}-\overline{F}$
and $C_{n}=\overline{G}_{n}-\overline{F}_{n},$ with $G_{n}$ denoting the
empirical df of $G,$ we write $\left\vert S_{n2}\left(  x\right)  \right\vert
\leq\theta_{n}\left(  T_{n1}\left(  x\right)  +T_{n2}\left(  x\right)
\right)  ,$ where%
\[
T_{n1}\left(  x\right)  :=\int_{xX_{n-k:n}}^{\infty}\dfrac{\left\vert
\overline{F}_{n}\left(  z\right)  -\overline{F}\left(  z\right)  \right\vert
}{C^{2}\left(  z\right)  }dF_{n}\left(  z\right)
\]
and%
\[
T_{n2}\left(  x\right)  :=\int_{xX_{n-k:n}}^{\infty}\dfrac{\left\vert
\overline{G}_{n}\left(  z\right)  -\overline{G}\left(  z\right)  \right\vert
}{C^{2}\left(  z\right)  }dF_{n}\left(  z\right)  .
\]
It is easy to verify that, by a change of variables, we have%
\begin{align*}
\frac{\sqrt{k}T_{n1}\left(  x\right)  }{\Lambda\left(  a_{k}x\right)  }  &
=d_{n}\left(  x\right)  \frac{k/n}{C\left(  a_{k}\right)  }\frac{C\left(
a_{k}x\right)  }{C\left(  a_{k}\right)  }\\
&  \times\frac{C^{2}\left(  a_{k}\right)  }{C^{2}\left(  xX_{n-k:n}\right)
}\int_{1}^{\infty}\dfrac{\left\vert \alpha_{n}\left(  \dfrac{n}{k}\overline
{F}\left(  xX_{n-k:n}z\right)  \right)  \right\vert }{C^{2}\left(
xX_{n-k:n}z\right)  /C^{2}\left(  xX_{n-k:n}\right)  }d\frac{F_{n}\left(
xX_{n-k:n}z\right)  }{\overline{F}\left(  a_{k}\right)  }.
\end{align*}
Recall that, uniformly on $x\geq x_{0},$ we have $C\left(  a_{k}\right)
/C\left(  xX_{n-k:n}\right)  =O_{\mathbf{p}}\left(  1\right)  x^{1/\gamma
_{2}\pm\epsilon}.$ Moreover, we use $\left(  \ref{C_inequa1}\right)  $ and
$\left(  \ref{dn2}\right)  $ to write%
\begin{align*}
\frac{\sqrt{k}T_{n1}\left(  x\right)  }{\Lambda\left(  a_{k}x\right)  }  &
=O_{\mathbf{p}}\left(  \frac{k/n}{C\left(  a_{k}\right)  }\right)
x^{1/\gamma\pm\epsilon}\\
&  \times\int_{1}^{\infty}z^{2/\gamma_{2}}\left\vert \alpha_{n}\left(
\dfrac{n}{k}\overline{F}\left(  xX_{n-k:n}z\right)  \right)  \right\vert
d\frac{F_{n}\left(  xX_{n-k:n}z\right)  }{\overline{F}\left(  a_{k}\right)  }.
\end{align*}
\ On the other hand, by using the stochastic boundedness of $\sup_{0<s\leq
1}\left\vert \alpha_{n}\left(  s\right)  \right\vert /s^{\eta}$ we get%
\[
\frac{\sqrt{k}T_{n1}\left(  x\right)  }{\Lambda\left(  a_{k}x\right)
}=O_{\mathbf{p}}\left(  \frac{k/n}{C\left(  a_{k}\right)  }\right)  x^{\left(
1-\eta\right)  /\gamma\pm\epsilon}\int_{1}^{\infty}z^{2/\gamma_{2}-\eta
/\gamma\pm\epsilon}d\frac{\overline{F}_{n}\left(  xX_{n-k:n}z\right)
}{\overline{F}\left(  a_{k}\right)  },
\]
where the integral may be split as follows%
\[
\int_{1}^{\infty}z^{2/\gamma_{2}-\eta/\gamma\pm\epsilon}d\frac{\overline
{F}_{n}\left(  xX_{n-k:n}z\right)  }{\overline{F}\left(  a_{k}\right)  }%
=P_{n}\left(  x\right)  +Q_{n}\left(  x\right)  ,
\]
where%
\[
P_{n}\left(  x\right)  :=\int_{1}^{\infty}z^{2/\gamma_{2}-\eta/\gamma
\pm\epsilon}d\left\{  \frac{\overline{F}_{n}\left(  xX_{n-k:n}z\right)
-\overline{F}\left(  xX_{n-k:n}z\right)  }{\overline{F}\left(  a_{k}\right)
}\right\}  ,
\]
and%
\[
Q_{n}\left(  x\right)  :=\int_{1}^{\infty}z^{2/\gamma_{2}-\eta/\gamma
\pm\epsilon}d\frac{\overline{F}\left(  xX_{n-k:n}z\right)  }{\overline
{F}\left(  a_{k}\right)  }.
\]
It is clear that
\[
P_{n}\left(  x\right)  =k^{-1/2}\int_{1}^{\infty}z^{2/\gamma_{2}-\eta
/\gamma\pm\epsilon}d\alpha_{n}\left(  \frac{n}{k}\overline{F}\left(
xX_{n-k:n}z\right)  \right)  .
\]
By similar arguments as those used above, we show that $P_{n}\left(  x\right)
=o_{\mathbf{p}}\left(  x^{-\eta/\gamma\pm\epsilon}\right)  $ and $Q_{n}\left(
x\right)  =O_{\mathbf{p}}\left(  x^{-1/\gamma\pm\epsilon}\right)  .$ Therefore%
\[
\frac{\sqrt{k}T_{n1}\left(  x\right)  }{\Lambda\left(  a_{k}x\right)
}=x^{-\eta/\gamma\pm\epsilon}O_{\mathbf{p}}\left(  \frac{k/n}{C\left(
a_{k}\right)  }\right)  .
\]
Next, let $V_{i}:=\overline{G}\left(  Y_{i}\right)  ,$ $i=1,...,n,$ and define
the corresponding tail empirical process $\beta_{n}\left(  s\right)
:=\sqrt{k}\left(  \mathbf{V}_{n}\left(  s\right)  -s\right)  ,$ for $0\leq
s\leq1,$ where $\mathbf{V}_{n}\left(  s\right)  :=k^{-1}\sum_{i=1}%
^{n}\mathbf{1}\left(  V_{i}<ks/n\right)  .$ Like for $\alpha_{n}\left(
s\right)  ,$ we also have $\sup_{0<s\leq1}\left\vert \beta_{n}\left(
s\right)  \right\vert /s^{\eta}=O_{\mathbf{p}}\left(  1\right)  ,$ therefore
by similar arguments as those used for $T_{n1}\left(  x\right)  ,$ with the
facts that $\overline{G}\left(  t\right)  \sim C\left(  t\right)  $ as
$t\rightarrow\infty$ and $\gamma_{2}>\gamma,$ we show that
\[
\frac{\sqrt{k}T_{n2}\left(  x\right)  }{\Lambda\left(  a_{k}x\right)
}=O_{\mathbf{p}}\left(  \frac{k/n}{C^{1-\eta}\left(  a_{k}\right)  }\right)
x^{\left(  1-\eta\right)  /\gamma\pm\epsilon}.
\]
From Lemma $\ref{Lemma1}$ $\left(  ii\right)  $, we have that both $\dfrac
{n}{k}C\left(  a_{k}\right)  $ and $\dfrac{n}{k}C^{1-\eta}\left(
a_{k}\right)  $ tend to infinity, it follows that
\[
\frac{\sqrt{k}T_{n1}\left(  x\right)  }{\Lambda\left(  a_{k}x\right)
}=o_{\mathbf{p}}\left(  x^{-\eta/\gamma\pm\epsilon}\right)  \text{ and }%
\frac{\sqrt{k}T_{n2}\left(  x\right)  }{\Lambda\left(  a_{k}x\right)
}=o_{\mathbf{p}}\left(  x^{\left(  1-\eta\right)  /\gamma\pm\epsilon}\right)
.
\]
Since $o_{\mathbf{p}}\left(  x^{-\eta/\gamma\pm\epsilon}\right)
+o_{\mathbf{p}}\left(  x^{\left(  1-\eta\right)  /\gamma\pm\epsilon}\right)
=o_{\mathbf{p}}\left(  x^{\left(  1-\eta\right)  /\gamma\pm\epsilon}\right)
,$ then%
\begin{equation}
\frac{\sqrt{k}S_{n2}\left(  x\right)  }{\Lambda\left(  a_{k}x\right)
}=o_{\mathbf{p}}\left(  x^{\left(  1-\eta\right)  /\gamma\pm\epsilon}\right)
. \label{Sn2}%
\end{equation}
Let us now focus on the third term $S_{n3},$ which, by integration by parts,
equals the sum of%
\[
S_{n3}^{\left(  1\right)  }\left(  x\right)  :=-\int_{xX_{n-k:n}}^{xa_{k}%
}\frac{\overline{F}_{n}\left(  z\right)  -\overline{F}\left(  z\right)
}{C^{2}\left(  z\right)  }dC\left(  z\right)  ,
\]
and
\[
S_{n3}^{\left(  2\right)  }\left(  x\right)  =-\frac{\overline{F}_{n}\left(
a_{k}x\right)  -\overline{F}\left(  a_{k}x\right)  }{C\left(  a_{k}x\right)
}+\frac{\overline{F}_{n}\left(  xX_{n-k:n}\right)  -\overline{F}\left(
xX_{n-k:n}\right)  }{C\left(  xX_{n-k:n}\right)  }.
\]
By using the change of variables $z=txa_{k}$ we get%
\[
\frac{\sqrt{k}S_{n3}^{\left(  1\right)  }\left(  x\right)  }{\Lambda\left(
a_{k}x\right)  }=-d_{n}\left(  x\right)  \int_{X_{n-k:n}/a_{k}}^{1}%
\frac{\alpha_{n}\left(  \dfrac{n}{k}\overline{F}\left(  a_{k}xz\right)
\right)  }{\left(  C\left(  a_{k}xz\right)  /C\left(  a_{k}x\right)  \right)
^{2}}d\frac{C\left(  a_{k}xz\right)  }{C\left(  a_{k}x\right)  },
\]
and%
\[
\frac{\sqrt{k}S_{n3}^{\left(  2\right)  }\left(  x\right)  }{\Lambda\left(
a_{k}x\right)  }=-d_{n}\left(  x\right)  \left\{  \alpha_{n}\left(  \frac
{n}{k}\overline{F}\left(  a_{k}x\right)  \right)  -\frac{C\left(
a_{k}x\right)  }{C\left(  xX_{n-k:n}\right)  }\alpha_{n}\left(  \dfrac{n}%
{k}\overline{F}\left(  xX_{n-k:n}\right)  \right)  \right\}  .
\]
Routine manipulations, including Proposition B.1.10 in \cite{deHF06} and the
stochastic boundedness of $\sup_{0<s\leq1}\left\vert \alpha_{n}\left(
s\right)  \right\vert /s^{\eta},$ yield%
\[
\frac{\sqrt{k}S_{n3}^{\left(  1\right)  }\left(  x\right)  }{\Lambda\left(
a_{k}x\right)  }=o_{\mathbf{p}}\left(  x^{\left(  1-\eta\right)  /\gamma
\pm\epsilon}\right)  \text{ and }\frac{\sqrt{k}S_{n3}^{\left(  2\right)
}\left(  x\right)  }{\Lambda\left(  a_{k}x\right)  }=o_{\mathbf{p}}\left(
x^{\left(  1-\eta\right)  /\gamma\pm\epsilon}\right)  .
\]
It follows that
\begin{equation}
\sqrt{k}S_{n3}\left(  x\right)  /\Lambda\left(  a_{k}x\right)  =o_{\mathbf{p}%
}\left(  x^{\left(  1-\eta\right)  /\gamma\pm\epsilon}\right)  . \label{Sn3}%
\end{equation}
By gathering results $\left(  \ref{Sn1}\right)  ,$ $\left(  \ref{Sn2}\right)
$ and $\left(  \ref{Sn3}\right)  ,$ we obtain%
\begin{align}
&  \sqrt{k}\frac{\Lambda_{n}\left(  xX_{n-k:n}\right)  -\Lambda\left(
xX_{n-k:n}\right)  }{\Lambda\left(  a_{k}x\right)  }\label{Lambda}\\
&  =\frac{\gamma}{\gamma_{1}}x^{1/\gamma}\mathbf{W}\left(  x^{-1/\gamma
}\right)  +\frac{\gamma}{\gamma_{1}+\gamma_{2}}x^{1/\gamma}\int_{0}%
^{1}t^{-\gamma/\gamma_{2}-1}\mathbf{W}\left(  x^{-1/\gamma}t\right)
dt+O_{\mathbf{p}}\left(  \epsilon\right)  x^{\left(  1-\eta\right)  /\gamma
\pm\epsilon},\nonumber
\end{align}
which yields that%
\begin{align*}
&  x^{1/\gamma_{1}}\sqrt{k}\mathbf{M}_{n1}^{\ast}\left(  x\right) \\
&  =x^{1/\gamma}\left\{  \frac{\gamma}{\gamma_{1}}\mathbf{W}\left(
x^{-1/\gamma}\right)  +\frac{\gamma}{\gamma_{1}+\gamma_{2}}\int_{0}%
^{1}t^{-\gamma/\gamma_{2}-1}\mathbf{W}\left(  x^{-1/\gamma}t\right)
dt\right\}  +O_{\mathbf{p}}\left(  \epsilon\right)  x^{\left(  1-\eta\right)
/\gamma\pm\epsilon}.
\end{align*}
We show that the expectation of the absolute value of the first term in the
right-hand side of the previous equation equals $O_{\mathbf{p}}\left(
x^{1/\left(  2\gamma\right)  }\right)  .$ Since $1/\left(  2\gamma\right)
<\left(  1-\eta\right)  /\gamma,$ we have $x^{1/\gamma_{1}}\sqrt{k}%
\mathbf{M}_{n1}^{\ast}\left(  x\right)  =O_{\mathbf{p}}\left(  x^{\left(
1-\eta\right)  /\gamma\pm\epsilon}\right)  ,$ which leads to%
\[
x^{1/\gamma_{1}}\sqrt{k}\mathbf{M}_{n1}\left(  x\right)  =x^{1/\gamma_{1}%
}\sqrt{k}\mathbf{M}_{n1}^{\ast}\left(  x\right)  +o_{\mathbf{p}}\left(
x^{\left(  1-\eta\right)  /\gamma\pm\epsilon}\right)  .
\]
Recall that $\epsilon>0$ is chosen sufficiently small, then for any
$0<\eta<1/2,$ we have%
\begin{align*}
&  x^{1/\gamma_{1}}\sqrt{k}\mathbf{M}_{n1}\left(  x\right) \\
&  =x^{1/\gamma}\left\{  \frac{\gamma}{\gamma_{1}}\mathbf{W}\left(
x^{-1/\gamma}\right)  +\frac{\gamma}{\gamma_{1}+\gamma_{2}}\int_{0}%
^{1}t^{-\gamma/\gamma_{2}-1}\mathbf{W}\left(  x^{-1/\gamma}t\right)
dt\right\}  +O_{\mathbf{p}}\left(  \epsilon\right)  x^{\left(  1-\eta\right)
/\gamma\pm\epsilon}.
\end{align*}
Before we treat the term $\mathbf{M}_{n2}\left(  x\right)  ,$ it is worth
mentioning that by letting $x=1$ in the previous approximation, we infer that%
\begin{equation}
\frac{\overline{\mathbf{F}}_{n}\left(  X_{n-k:n}\right)  }{\overline
{\mathbf{F}}\left(  X_{n-k:n}\right)  }-1=O_{\mathbf{p}}\left(  k^{-1/2}%
\right)  =o_{\mathbf{p}}\left(  1\right)  ,\text{ }\left(  k\rightarrow
\infty\right)  . \label{important}%
\end{equation}
This, with the regular variation of $\overline{\mathbf{F}},$ imply that%
\begin{equation}
\frac{\overline{\mathbf{F}}\left(  xX_{n-k:n}\right)  }{\overline{\mathbf{F}%
}_{n}\left(  X_{n-k:n}\right)  }=\left(  1+O_{\mathbf{p}}\left(
x^{\pm\epsilon}\right)  \right)  x^{-1/\gamma_{1}}. \label{important2}%
\end{equation}
To represent $\sqrt{k}\mathbf{M}_{n2}\left(  x\right)  ,$ we apply results
$\left(  \ref{Lambda}\right)  $ (for $x=1)$ and $\left(  \ref{important2}%
\right)  $ to get%
\[
x^{1/\gamma_{1}}\sqrt{k}\mathbf{M}_{n2}\left(  x\right)  =-\left\{
\frac{\gamma}{\gamma_{1}}\mathbf{W}\left(  1\right)  +\frac{\gamma}{\gamma
_{1}+\gamma_{2}}\int_{0}^{1}t^{-\gamma/\gamma_{2}-1}\mathbf{W}\left(
t\right)  dt\right\}  +O_{\mathbf{p}}\left(  \epsilon\right)  x^{\pm\epsilon
}.
\]
For the third term $\mathbf{M}_{n3}\left(  x\right)  ,$ we write%
\[
x^{1/\gamma_{1}}\sqrt{k}\mathbf{M}_{n3}\left(  x\right)  =\left(
\frac{\overline{\mathbf{F}}\left(  xX_{n-k:n}\right)  }{\overline{\mathbf{F}%
}_{n}\left(  X_{n-k:n}\right)  }-x^{-1/\gamma_{1}}\right)  x^{1/\gamma_{1}%
}\sqrt{k}\mathbf{M}_{n1}\left(  x\right)  ,
\]
which, by equation $\left(  \ref{important2}\right)  ,$ is equal to
$O_{\mathbf{p}}\left(  \epsilon\right)  x^{-1/\gamma_{1}+\left(
1-\eta\right)  /\gamma\pm\epsilon}.$ Let $\eta_{0}$ be such that
$\gamma/\gamma_{2}<\eta_{0}<\eta<1/2,$ then $\eta_{0}-\eta<0$ and for
$\epsilon>0$ sufficiently small, we have $\left(  \eta_{0}-\eta\right)
/\gamma+\epsilon<0.$ Since $x\geq x_{0}>0,$ then $O_{\mathbf{p}}\left(
\epsilon\right)  x^{\left(  \eta_{0}-\eta\right)  /\gamma\pm\epsilon
}=O_{\mathbf{p}}\left(  \epsilon\right)  $ and thus%
\begin{equation}
x^{1/\gamma_{1}-\left(  1-\eta_{0}\right)  /\gamma}\left\{  \sqrt{k}\left(
\mathbf{M}_{n1}\left(  x\right)  +\mathbf{M}_{n2}\left(  x\right)
+\mathbf{M}_{n3}\left(  x\right)  \right)  -\mathbf{\Gamma}\left(
x;\mathbf{W}\right)  \right\}  =O_{\mathbf{p}}\left(  \epsilon\right)  ,
\label{MM}%
\end{equation}
where $\mathbf{\Gamma}\left(  x;\mathbf{W}\right)  $ is the Gaussian process
given in Theorem \ref{Theorem1}. For the fourth term $\mathbf{M}_{n4}\left(
x\right)  ,$ it suffices to use the uniform inequality to second-order
condition $\left(  \ref{second-order}\right)  ,$ given in assertion (2.3.23)
of Theorem 2.3.9 in \cite{deHF06}, to get
\[
\sqrt{k}\mathbf{M}_{n4}\left(  x\right)  =\left(  1+o_{\mathbf{p}}\left(
1\right)  \right)  x^{-1/\gamma_{1}}\dfrac{x^{\tau_{1}/\gamma_{1}}-1}%
{\gamma_{1}\tau_{1}}\sqrt{k}\widetilde{\mathbf{A}}_{\mathbf{F}}\left(
1/\overline{\mathbf{F}}\left(  X_{n-k:n}\right)  \right)  ,
\]
for a possibly different function $\widetilde{\mathbf{A}}_{\mathbf{F}}$ with
$\widetilde{\mathbf{A}}_{\mathbf{F}}\sim\mathbf{A}_{\mathbf{F}}.$ Then
Proposition B.1.10 in \cite{deHF06} and the fact that $t\rightarrow
\widetilde{\mathbf{A}}_{\mathbf{F}}\left(  1/\overline{\mathbf{F}}\left(
\mathbb{U}_{F}\left(  t\right)  \right)  \right)  =:\mathbf{A}_{0}$ $\left(
t\right)  $ is regularly varying with index $\tau_{1}/\gamma_{1}$ with
$X_{n-k:n}/a_{k}\overset{\mathbf{p}}{\rightarrow}1,$ imply that%
\[
\frac{\mathbf{A}_{0}\left(  1/\overline{F}\left(  X_{n-k:n}\right)  \right)
}{\mathbf{A}_{0}\left(  1/\overline{F}\left(  a_{k}\right)  \right)  }%
\overset{\mathbf{p}}{\rightarrow}1,\text{ as }n\rightarrow\infty,
\]
as well. Since $o_{\mathbf{p}}\left(  1\right)  x^{-1/\gamma_{1}}%
\dfrac{x^{\tau_{1}/\gamma_{1}}-1}{\gamma_{1}\tau_{1}}=o_{\mathbf{p}}\left(
x^{-1/\gamma_{1}+\left(  1-\eta\right)  /\gamma\pm\epsilon}\right)  ,$ and by
assumption $\sqrt{k}\mathbf{A}_{0}\left(  1/\overline{F}\left(  a_{k}\right)
\right)  =\sqrt{k}\mathbf{A}_{0}\left(  n/k\right)  =O\left(  1\right)  ,$ it
follows that
\[
\sqrt{k}\mathbf{M}_{n4}\left(  x\right)  =x^{-1/\gamma_{1}}\dfrac{x^{\tau
_{1}/\gamma_{1}}-1}{\gamma_{1}\tau_{1}}\sqrt{k}\mathbf{A}_{0}\left(
n/k\right)  +o_{\mathbf{p}}\left(  x^{-1/\gamma_{1}+\left(  1-\eta\right)
/\gamma\pm\epsilon}\right)  .
\]
Finally, by letting $\epsilon\downarrow0$ in $\left(  \ref{MM}\right)  ,$ we
end up with%
\[
\sup_{x\geq x_{0}}x^{1/\gamma_{1}-\left(  1-\eta_{0}\right)  /\gamma
}\left\vert \mathbf{D}_{n}\left(  x\right)  -\mathbf{\Gamma}\left(
x;\mathbf{W}\right)  -x^{-1/\gamma_{1}}\dfrac{x^{\tau_{1}/\gamma_{1}}%
-1}{\gamma_{1}\tau_{1}}\sqrt{k}\mathbf{A}_{0}\left(  n/k\right)  \right\vert
\overset{\mathbf{P}}{\rightarrow}0,
\]
for every $x_{0}>0$\ and $\gamma/\gamma_{2}<\eta_{0}<\eta<1/2.$\textbf{\ }%
Letting $\eta_{0}:=1/2-\xi$ and recalling that $1/\gamma_{1}=1/\gamma
-1/\gamma_{2}$ yields that $0<\xi<1/2-\gamma/\gamma_{2}$\ and achieves the
proof.$\hfill\square$

\subsection{Proof of Theorem \ref{Theorem2}}

We start by proving the consistency of $\widehat{\gamma}_{1}$ that we write as
$\widehat{\gamma}_{1}=\int_{1}^{\infty}x^{-1}\overline{\mathbf{F}}_{n}\left(
xX_{n-k:n}\right)  /\overline{\mathbf{F}}_{n}\left(  X_{n-k:n}\right)  dx.$ It
is readily checked that this may be decomposed into the sum of
\[
I_{1n}:=\int_{1}^{\infty}x^{-1}\frac{\overline{\mathbf{F}}\left(
xX_{n-k:n}\right)  }{\overline{\mathbf{F}}\left(  X_{n-k:n}\right)  }dx\text{
and }I_{2n}:=\int_{1}^{\infty}x^{-1}\sum_{i=1}^{3}\mathbf{M}_{ni}\left(
x\right)  dx.
\]
By the regular variation of $\overline{\mathbf{F}}$ $\left(  \ref{RV-1}%
\right)  $ and Potter's inequalities, we get $I_{1n}\overset{\mathbf{P}%
}{\rightarrow}\gamma_{1}$ as $n\rightarrow\infty.$ Then, we just need to show
that $I_{2n}$ tends to zero in probability. From $\left(  \ref{MM}\right)  $
we have%
\[
I_{2n}=\frac{1}{\sqrt{k}}\int_{1}^{\infty}x^{-1}\mathbf{\Gamma}\left(
x;\mathbf{W}\right)  dx+\frac{1}{\sqrt{k}}\int_{1}^{\infty}x^{-1}%
o_{\mathbf{p}}\left(  x^{\left(  1-\eta\right)  /\gamma-1/\gamma_{1}}\right)
dx.
\]
On the one hand, since $\gamma/\gamma_{2}<\eta,$ the second integral above is
finite and therefore the second term of $I_{2n}$ is negligible in probability.
On the other hand, we have%
\begin{align*}
\int_{1}^{\infty}x^{-1}\mathbf{\Gamma}\left(  x;\mathbf{W}\right)  dx  &
=\frac{\gamma}{\gamma_{1}}\int_{1}^{\infty}x^{1/\gamma_{2}-1}\left\{
\mathbf{W}\left(  x^{-1/\gamma}\right)  -x^{-1/\gamma}\mathbf{W}\left(
1\right)  \right\}  dx+\frac{\gamma}{\gamma_{1}+\gamma_{2}}\\
&  \times\int_{1}^{\infty}x^{1/\gamma_{2}-1}\left\{  \int_{0}^{1}%
s^{-\gamma/\gamma_{2}-1}\left\{  \mathbf{W}\left(  x^{-1/\gamma}s\right)
-x^{-1/\gamma}\mathbf{W}\left(  s\right)  \right\}  ds\right\}  dx,
\end{align*}
which, after some elementary but tedious manipulations of integral calculus
(change of variables and integration by parts), becomes%
\begin{align}
\int_{1}^{\infty}x^{-1}\mathbf{\Gamma}\left(  x;\mathbf{W}\right)  dx  &
=-\gamma\mathbf{W}\left(  1\right) \label{GAMMA}\\
&  +\frac{\gamma}{\gamma_{1}+\gamma_{2}}\int_{0}^{1}\left(  \gamma_{2}%
-\gamma_{1}-\gamma\log s\right)  s^{-\gamma/\gamma_{2}-1}\mathbf{W}\left(
s\right)  ds.\nonumber
\end{align}
By using the facts that $\mathbf{E}\left\vert \mathbf{W}\left(  s\right)
\right\vert \leq s^{1/2}$ and $\gamma_{1}<\gamma_{2},$ we deduce that
$\int_{1}^{\infty}x^{-1}\mathbf{\Gamma}\left(  x;\mathbf{W}\right)  dx$ is
stochastically bounded and therefore the first term of $I_{2n}$ is is
negligible in probability as well. Consequently, we have $I_{2n}%
=o_{\mathbf{P}}\left(  1\right)  $ when $n\rightarrow\infty,$ as sought. As
for the Gaussian representation result, it is easy to verify that $\sqrt
{k}\left(  \widehat{\gamma}_{1}-\gamma_{1}\right)  =\int_{1}^{\infty}%
x^{-1}\mathbf{D}_{n}\left(  x\right)  dx.$ Then, applying Theorem
\ref{Theorem1} yields that%
\[
\sqrt{k}\left(  \widehat{\gamma}_{1}-\gamma_{1}\right)  =\frac{\sqrt
{k}\mathbf{A}_{0}\left(  n/k\right)  }{1-\tau}+\int_{1}^{\infty}%
x^{-1}\mathbf{\Gamma}\left(  x;\mathbf{W}\right)  dx+o_{\mathbf{P}}\left(
1\right)  ,
\]
and finally, using result $\left(  \ref{GAMMA}\right)  $ completes the
proof.$\hfill\square$

\subsection{Proof of Corollary \ref{Cor1}}

We set%
\[
\sqrt{k}\left(  \widehat{\gamma}_{1}-\gamma_{1}\right)  =\gamma\Delta
+\frac{\sqrt{k}\mathbf{A}_{0}\left(  n/k\right)  }{1-\tau}+o_{\mathbf{P}%
}\left(  1\right)  ,
\]
where $\Delta:=a\Delta_{1}+b\Delta_{2}-\Delta_{3},$ with $a:=\left(
\gamma_{2}-\gamma_{1}\right)  /\left(  \gamma_{1}+\gamma_{2}\right)  ,$
$b:=-\gamma/\left(  \gamma_{1}+\gamma_{2}\right)  $ and%
\[
\Delta_{1}:=\int_{0}^{1}s^{\rho-2}\mathbf{W}\left(  s\right)  ds,\text{
}\Delta_{2}:=\int_{0}^{1}s^{\rho-2}\mathbf{W}\left(  s\right)  \log sds,\text{
}\Delta_{3}:=\mathbf{W}\left(  1\right)  ,
\]
with $\rho:=1-\gamma/\gamma_{2}>0.$

\noindent It is clear that the asymptotic mean is equal to $\lim
_{n\rightarrow\infty}\sqrt{k}\mathbf{A}_{0}\left(  n/k\right)  /\left(
1-\tau\right)  ,$ while for the asymptotic variance we find, after elementary
but tedious computations, the following covariances:%
\begin{align*}
\mathbf{E}\left[  \Delta_{1}^{2}\right]   &  =\frac{2}{\rho\left(
2\rho-1\right)  },\text{ }\mathbf{E}\left[  \Delta_{2}^{2}\right]
=\frac{2\left(  4\rho-1\right)  }{\rho^{2}\left(  2\rho-1\right)  ^{3}},\text{
}\mathbf{E}\left[  \Delta_{3}^{2}\right]  =1,\\
\mathbf{E}\left[  \Delta_{1}\Delta_{2}\right]   &  =\frac{1-4\rho}{\rho
^{2}\left(  2\rho-1\right)  ^{2}},\text{ }\mathbf{E}\left[  \Delta_{1}%
\Delta_{3}\right]  =\frac{1}{\rho},\text{ }\mathbf{E}\left[  \Delta_{2}%
\Delta_{3}\right]  =-\frac{1}{\rho^{2}}.
\end{align*}
It follows that
\[
\mathbf{E}\left[  \Delta^{2}\right]  =\frac{2a^{2}}{\rho\left(  2\rho
-1\right)  }+\frac{2b^{2}\left(  4\rho-1\right)  }{\rho^{2}\left(
2\rho-1\right)  ^{3}}+\frac{2ab\left(  1-4\rho\right)  }{\rho^{2}\left(
2\rho-1\right)  ^{2}}+\frac{2b}{\rho^{2}}-\frac{2a}{\rho}+1.
\]
Replacing $a,$ $b$ and $\rho$ by their values achieves the proof.$\hfill
\square\medskip$

\noindent\textbf{Concluding notes\medskip}

\noindent We would like to emphasize the fact that, unlike \cite{GS2015} who
defined their estimator in terms of two (not necessarily equal) sample
fractions $k=k^{\prime}$ of upper order statistics from $X$ and $Y$
respectively, we build our estimator on the basis of just a single sample
fraction. The consideration of two distinct sample fractions poses a problem
from a computational point of view, as the issue of selecting an optimal
couple of sample fractions is not as easy and usual as determining just one
best number of top statistics to be used in parameter estimate computation.
Besides that, \cite{GS2015} didn't treat the asymptotic normality when
$k=k^{\prime}$ and only carried out their simulations in this very particular
case, as they mentioned in their conclusion.\textbf{ }For these reasons, we
don't compare the two estimators in Section \ref{sec5}.\textbf{\medskip}

\noindent A more thorough simulation study, with confidence interval
construction and eventual comparison with the estimator of \cite{GS2015}, will
be part of a future work. Another point, beyond the scope of the present
paper, that deserves to be considered is to reduce estimation biases under
random truncation. Similar anterior works were done with complete datasets by,
for instance, \cite{PengQi-04}, \cite{LiPeng-10} and \cite{BMNY-13}%
.\textbf{\medskip}

\noindent We finish this work by making a comment on relation $\left(
\ref{log}\right)  ,$ which actually is a special case of a more general
functional of the distribution tail defined by%
\[
\Gamma_{t}\left(  g,\alpha\right)  :=\frac{\dfrac{1}{\overline{F}\left(
t\right)  }\int_{t}^{\infty}g\left(  \dfrac{\overline{F}\left(  x\right)
}{\overline{F}\left(  t-\right)  }\right)  \left(  \log\dfrac{x}{t}\right)
^{\alpha}dF\left(  x\right)  }{\int_{0}^{1}g\left(  x\right)  \left(  -\log
x\right)  ^{\alpha}dx},\text{ }t\geq0,
\]
where $g$ is some weight function and $\alpha$ some positive real number. As a
consequence of the fact that $\lim_{t\rightarrow\infty}\Gamma_{t}\left(
g,\alpha\right)  =\gamma^{\alpha},$ this functional can be considered as the
starting point to constructing a whole class of estimators for distribution
tail parameters. Indeed, in the complete data case, we replace $F$ by its
empirical counterpart $F_{n}$ and $t$ by $X_{n-k:n}$ to get the following
statistic which generalizes several extreme value theory based procedures of
estimation already existing in the literature:%
\[
\Gamma_{n,k}\left(  g,\alpha\right)  :=\frac{\dfrac{1}{k}\sum\limits_{i=1}%
^{k}g\left(  \dfrac{i}{k+1}\right)  \left(  \log\dfrac{X_{n-i+1:n}}{X_{n-k:n}%
}\right)  ^{\alpha}}{\int_{0}^{1}g\left(  x\right)  \left(  -\log x\right)
^{\alpha}dx}.
\]
When $g=\alpha=1,$ we recover the famous Hill estimator \citep{Hill75}. For a
detailed list of extreme value index estimators drawn from the statistic
above, we refer to the paper of \cite{Mercadier-10}, where the authors propose
an estimation approach of the second-order parameter by considering
differences and quotients of several forms of $\Gamma_{n,k}\left(
g,\alpha\right)  .$ By analogy, when we deal with randomly truncated
observations, we substitute the product-limit estimator $\mathbf{F}_{n}$ for
$F$ in the formula of $\Gamma_{t}\left(  g,\alpha\right)  $ in order to obtain
the following family of parameter estimators under random truncation:%
\[
\Gamma_{n,k}\left(  g,\alpha\right)  :=\frac{\sum\limits_{i=1}^{k}%
a_{n}^{\left(  i\right)  }g\left(  \dfrac{\overline{\mathbf{F}}_{n}\left(
X_{n-i+1:n}\right)  }{\overline{\mathbf{F}}_{n}\left(  X_{n-k-1:n}\right)
}\right)  \left(  \log\dfrac{X_{n-i+1:n}}{X_{n-k:n}}\right)  ^{\alpha}}%
{\sum\limits_{i=1}^{k}a_{n}^{\left(  i\right)  }\int_{0}^{1}g\left(  x\right)
\left(  -\log x\right)  ^{\alpha}dx}.
\]
This would have fruitful consequences on the statistical analysis of extremes
under random truncation.

\section{\textbf{Appendix}}

\begin{lemma}
\textbf{\label{Lemma0}}Assume that both second-order conditions
$(\ref{second-order})$\ and $(\ref{second-orderG})$\ hold. Then, for all large
$x,$ there exist constants $d_{1},d_{2}>0,$ such that%
\[
\overline{F}\left(  x\right)  =\left(  1+o\left(  1\right)  \right)
d_{1}x^{-1/\gamma}\text{ and }\overline{G}\left(  x\right)  =\left(
1+o\left(  1\right)  \right)  d_{2}x^{-1/\gamma_{2}}.
\]

\end{lemma}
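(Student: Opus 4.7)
\medskip

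\noindent\textbf{Proof plan.} The plan is to proceed in three stages. First I would translate the second-order conditions on $\mathbb{U}_{\mathbf{F}}$ and $\mathbb{U}_{\mathbf{G}}$ into tail asymptotics of the form
\[
\overline{\mathbf{F}}(x)\sim c_{1}x^{-1/\gamma_{1}},\qquad \overline{\mathbf{G}}(x)\sim c_{2}x^{-1/\gamma_{2}},
\]
for some positive constants $c_{1},c_{2}$. Writing $L(t):=\mathbb{U}_{\mathbf{F}}(t)t^{-\gamma_{1}}$, condition $(\ref{second-order})$ evaluated at $x=2$ yields $L(2t)/L(t)-1=O(\mathbf{A}_{\mathbf{F}}(t))$. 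Because $|\mathbf{A}_{\mathbf{F}}|$ is regularly varying with index $\tau_{1}<0$, the dyadic sum $\sum_{j\geq 0}|\mathbf{A}_{\mathbf{F}}(2^{j})|$ converges, so $\log L(2^{k})$ is a Cauchy sequence. Combining this with the uniform convergence of $L(tx)/L(t)\to 1$ on compact subsets of $(0,\infty)$ forces $L(t)\to c_{0}>0$, whence $\mathbb{U}_{\mathbf{F}}(t)\sim c_{0}t^{\gamma_{1}}$; inverting gives $\overline{\mathbf{F}}(x)\sim c_{1}x^{-1/\gamma_{1}}$, and symmetrically for $\overline{\mathbf{G}}$.

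Next, for $\overline{G}$ I would exploit the formula $(\ref{FbarGbar})$: writing $\mathbf{F}=1-\overline{\mathbf{F}}$ and using $\overline{\mathbf{G}}(\infty)=0$,
\[
\overline{G}(x)=p^{-1}\overline{\mathbf{G}}(x)+p^{-1}\int_{x}^{\infty}\overline{\mathbf{F}}(z)\,d\overline{\mathbf{G}}(z).
\]
Substituting the asymptotics from Stage 1, the integral is $O(x^{-1/\gamma_{1}}\overline{\mathbf{G}}(x))=o(\overline{\mathbf{G}}(x))$ by Karamata-type comparison, so $\overline{G}(x)\sim p^{-1}\overline{\mathbf{G}}(x)\sim d_{2}x^{-1/\gamma_{2}}$ with $d_{2}=c_{2}/p$.

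For $\overline{F}$ I would integrate by parts in the first formula of $(\ref{FbarGbar})$:
\[
\overline{F}(x)=p^{-1}\overline{\mathbf{G}}(x)\overline{\mathbf{F}}(x)+p^{-1}\int_{x}^{\infty}\overline{\mathbf{F}}(z)\,d\overline{\mathbf{G}}(z).
\]
The boundary term is $\sim p^{-1}c_{1}c_{2}x^{-1/\gamma}$, and the integral, evaluated via the asymptotic density $d\overline{\mathbf{G}}(z)\sim -(c_{2}/\gamma_{2})z^{-1/\gamma_{2}-1}dz$ together with $\int_{x}^{\infty}z^{-1/\gamma-1}dz=\gamma x^{-1/\gamma}$, yields $\sim -p^{-1}c_{1}c_{2}(\gamma/\gamma_{2})x^{-1/\gamma}$. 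Summing the two contributions and using $1-\gamma/\gamma_{2}=\gamma_{2}/(\gamma_{1}+\gamma_{2})$ gives $\overline{F}(x)\sim d_{1}x^{-1/\gamma}$ with
\[
d_{1}=\frac{c_{1}c_{2}\gamma_{2}}{p(\gamma_{1}+\gamma_{2})}.
\]

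The crux of the argument is the first stage, where one must upgrade the second-order condition (a statement about convergence of ratios) into the existence of a genuine constant prefactor in the tail. The summability of $\mathbf{A}_{\mathbf{F}}$ on a dyadic scale, which is exactly what $\tau_{1}<0$ buys, is what makes this possible; once the constants $c_{1},c_{2}$ are secured, the two formulas in $(\ref{FbarGbar})$ reduce the rest to a short Karamata-style calculation, with the only care needed being the cancellation in Stage 3 that collapses two $O(x^{-1/\gamma})$ contributions into a single constant multiple of $x^{-1/\gamma}$.
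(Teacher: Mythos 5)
Your argument is correct in substance but follows a genuinely different route from the paper's. Where the paper simply invokes Lemma 3 of Hua and Joe (2011) to convert the second-order conditions into $\overline{\mathbf{F}}(x)\sim a_{1}x^{-1/\gamma_{1}}$ and $\overline{\mathbf{G}}(x)\sim a_{2}x^{-1/\gamma_{2}}$, you prove this step from scratch: fixing $x=2$ in $(\ref{second-order})$ to get $L(2t)/L(t)-1=O(\mathbf{A}_{\mathbf{F}}(t))$ and using the geometric decay of $|\mathbf{A}_{\mathbf{F}}|$ along dyadic points (this is exactly what $\tau_{1}<0$ provides) to make $\log L(2^{k})$ Cauchy, then the uniform convergence theorem for the slowly varying part to pass from the dyadic sequence to all $t$. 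That is a sound and self-contained replacement for the citation. For the passage to $\overline{F}$, the paper writes $\overline{F}(x)=-p^{-1}\overline{\mathbf{G}}(x)\overline{\mathbf{F}}(x)\int_{1}^{\infty}\frac{\overline{\mathbf{G}}(xz)}{\overline{\mathbf{G}}(x)}\,d\frac{\overline{\mathbf{F}}(xz)}{\overline{\mathbf{F}}(x)}$ and evaluates the integral as $-(1+o(1))\gamma/\gamma_{1}$ via Proposition B.1.10, whereas you integrate by parts first and evaluate $\int_{x}^{\infty}\overline{\mathbf{F}}\,d\overline{\mathbf{G}}$, recovering the same constant since $1-\gamma/\gamma_{2}=\gamma/\gamma_{1}$ (your $d_{1}=c_{1}c_{2}\gamma_{2}/(p(\gamma_{1}+\gamma_{2}))$ coincides with the paper's $p^{-1}a_{1}a_{2}\gamma/\gamma_{1}$); you also write out explicitly the $\overline{G}$ case, which the paper dismisses as "similar arguments." The one step you should tighten is the phrase "evaluated via the asymptotic density $d\overline{\mathbf{G}}(z)\sim-(c_{2}/\gamma_{2})z^{-1/\gamma_{2}-1}dz$": nothing guarantees that $\mathbf{G}$ has a density, let alone one with this asymptotic behaviour (that would need a monotone-density-type argument). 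Instead, substitute $z=xu$ and show $\int_{1}^{\infty}\frac{\overline{\mathbf{F}}(xu)}{\overline{\mathbf{F}}(x)}\,d\frac{\overline{\mathbf{G}}(xu)}{\overline{\mathbf{G}}(x)}\rightarrow\int_{1}^{\infty}u^{-1/\gamma_{1}}\,d\bigl(u^{-1/\gamma_{2}}\bigr)=-\gamma/\gamma_{2}$ using Proposition B.1.10 together with Potter bounds (or an integration by parts inside the integral) — the same tools the paper itself uses for its version of this limit. With that repair, and noting in Stage 2 that $\bigl|\int_{x}^{\infty}\overline{\mathbf{F}}\,d\overline{\mathbf{G}}\bigr|\leq\overline{\mathbf{F}}(x)\overline{\mathbf{G}}(x)$ already gives the $o(\overline{\mathbf{G}}(x))$ bound without any density, your proof is complete.
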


\begin{proof}
We only show the first statement since the second one follows by similar
arguments. To this end, we rewrite the first equation of $\left(
\ref{FbarGbar}\right)  $ into%
\[
\overline{F}\left(  x\right)  =-p^{-1}\overline{\mathbf{G}}\left(  x\right)
\overline{\mathbf{F}}\left(  x\right)  \int_{1}^{\infty}\frac{\overline
{\mathbf{G}}\left(  xz\right)  }{\overline{\mathbf{G}}\left(  x\right)
}d\frac{\overline{\mathbf{F}}\left(  xz\right)  }{\overline{\mathbf{F}}\left(
x\right)  }.
\]
By applying Proposition B.1.10 in \cite{deHF06} to both $\overline{\mathbf{F}%
}$ and $\overline{\mathbf{G}},$ it is easy to check that%
\[
\int_{1}^{\infty}\frac{\overline{\mathbf{G}}\left(  xz\right)  }%
{\overline{\mathbf{G}}\left(  x\right)  }d\frac{\overline{\mathbf{F}}\left(
xz\right)  }{\overline{\mathbf{F}}\left(  x\right)  }=-\left(  1+o\left(
1\right)  \right)  \gamma/\gamma_{1}.
\]
On the other hand, since $\overline{\mathbf{F}}$ and $\overline{\mathbf{G}}$
satisfy the aforementioned second-order conditions, then in view of Lemma 3 in
\cite{HJ-2011}, there exist two constants $a_{1},a_{2}>0,$ such that
$\overline{\mathbf{F}}\left(  x\right)  =\left(  1+o\left(  1\right)  \right)
a_{1}x^{-1/\gamma_{1}}$ and $\overline{\mathbf{G}}\left(  x\right)  =\left(
1+o\left(  1\right)  \right)  a_{2}x^{-1/\gamma_{2}},$ as $x\rightarrow
\infty.$ Therefore $\overline{F}\left(  x\right)  =\left(  1+o\left(
1\right)  \right)  d_{1}x^{-1/\gamma}$ with $d_{1}=p^{-1}a_{1}a_{2}%
\gamma/\gamma_{1}.$
\end{proof}

\begin{lemma}
\textbf{\label{Lemma1}}Under the assumptions of Lemma $\ref{Lemma0},$ we have%
\[%
\begin{tabular}
[c]{l}%
$\left(  i\right)  $ $\lim\limits_{t\rightarrow\infty}C\left(  t\right)
/\overline{G}\left(  t\right)  =1.$\\
$\left(  ii\right)  \text{ }\lim\limits_{t\rightarrow\infty}t^{1/\nu}C\left(
\mathbb{U}_{F}\left(  t\right)  \right)  =\infty,$ for each $0<\nu\leq1.$\\
$\left(  iii\right)  $ $\lim_{t\rightarrow\infty}\sup_{x\geq x_{0}%
}x^{-1/\gamma\pm\epsilon}\left\vert \left(  t\Lambda\left(  x\mathbb{U}%
_{F}\left(  t\right)  \right)  C\left(  x\mathbb{U}_{F}\left(  t\right)
\right)  \right)  ^{-1}-\left(  \gamma/\gamma_{1}\right)  x^{1/\gamma
}\right\vert =0,$\\
\ \ \ \ \ \ for $x_{0}>0$ and any sufficiently small $\epsilon>0.$%
\end{tabular}
\ \ \ \
\]

\end{lemma}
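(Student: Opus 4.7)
For part $(i)$, the definition $C(z)=F(z)-G(z)$ combined with $\overline{F}(z)=1-F(z)$, $\overline{G}(z)=1-G(z)$ gives the elementary identity $C(z)=\overline{G}(z)-\overline{F}(z)$, so $C(t)/\overline{G}(t)=1-\overline{F}(t)/\overline{G}(t)$. By Lemma \ref{Lemma0}, $\overline{F}(t)\sim d_1 t^{-1/\gamma}$ and $\overline{G}(t)\sim d_2 t^{-1/\gamma_2}$, and since $1/\gamma=1/\gamma_1+1/\gamma_2>1/\gamma_2$ we have $\overline{F}(t)/\overline{G}(t)=O(t^{-1/\gamma_1})\to 0$, which yields the claim.

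For part $(ii)$, I would combine $(i)$ and Lemma \ref{Lemma0} with the defining relation $\overline{F}(\mathbb{U}_F(t))=1/t$ (at continuity). Inverting $\overline{F}(y)\sim d_1 y^{-1/\gamma}$ gives $\mathbb{U}_F(t)\sim(d_1 t)^{\gamma}$, and substituting into $\overline{G}(y)\sim d_2 y^{-1/\gamma_2}$ followed by $(i)$ gives $C(\mathbb{U}_F(t))\sim d_2(d_1 t)^{-\gamma/\gamma_2}$. Since $\gamma/\gamma_2=\gamma_1/(\gamma_1+\gamma_2)<1\le 1/\nu$, the exponent $1/\nu-\gamma/\gamma_2$ is strictly positive, giving divergence.

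For part $(iii)$, the crucial step is the asymptotic equivalence $\Lambda(y)\sim\overline{\mathbf{F}}(y)$ as $y\to\infty$. This comes directly from the identity $\mathbf{F}(y)=\exp(-\Lambda(y))$ derived from $(\ref{int})$: since $\Lambda(y)\to 0$, we get $\Lambda(y)=-\log(1-\overline{\mathbf{F}}(y))\sim\overline{\mathbf{F}}(y)$. Next, by the regular variation of $\overline{\mathbf{F}}$ (index $-1/\gamma_1$) and by $(i)$ combined with the regular variation of $\overline{G}$ (index $-1/\gamma_2$), I would write the pointwise asymptotics
\[
\Lambda(x\mathbb{U}_F(t))\sim x^{-1/\gamma_1}\overline{\mathbf{F}}(\mathbb{U}_F(t)),\qquad C(x\mathbb{U}_F(t))\sim x^{-1/\gamma_2}\overline{G}(\mathbb{U}_F(t)).
\]
Multiplying and using $\overline{G}(y)\sim p^{-1}\overline{\mathbf{G}}(y)$ (which follows from $(\ref{FbarGbar})$ since $\mathbf{F}(z)\to 1$), I obtain
\[
t\Lambda(x\mathbb{U}_F(t))C(x\mathbb{U}_F(t))\sim p^{-1}x^{-1/\gamma}\,t\overline{\mathbf{F}}(\mathbb{U}_F(t))\overline{\mathbf{G}}(\mathbb{U}_F(t)).
\]
The constant is then pinned down by reusing the intermediate step of the proof of Lemma \ref{Lemma0}, namely $\overline{F}(y)\sim p^{-1}(\gamma/\gamma_1)\overline{\mathbf{F}}(y)\overline{\mathbf{G}}(y)$, together with $t\overline{F}(\mathbb{U}_F(t))=1$: this forces $t\overline{\mathbf{F}}(\mathbb{U}_F(t))\overline{\mathbf{G}}(\mathbb{U}_F(t))\to p\gamma_1/\gamma$, so $t\Lambda(x\mathbb{U}_F(t))C(x\mathbb{U}_F(t))\to(\gamma_1/\gamma)x^{-1/\gamma}$ pointwise, and inversion yields the target $(\gamma/\gamma_1)x^{1/\gamma}$.

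The main obstacle is upgrading this pointwise convergence to the weighted uniform statement on $x\ge x_0$. I would handle it by replacing each of the three regular-variation asymptotics above with the corresponding Potter inequality (assertion 5 of Proposition B.1.9 in \cite{deHF06}): each ratio $\overline{\mathbf{F}}(x\mathbb{U}_F(t))/\overline{\mathbf{F}}(\mathbb{U}_F(t))$ and $\overline{G}(x\mathbb{U}_F(t))/\overline{G}(\mathbb{U}_F(t))$ is controlled by $(1\pm\epsilon)x^{-1/\gamma_i\pm\epsilon}$ uniformly in $x\ge x_0$ for $t$ large, and the equivalence $\Lambda\sim\overline{\mathbf{F}}$ contributes a further multiplicative $1+o(1)$ factor uniform on half-lines since $\Lambda(x\mathbb{U}_F(t))\to 0$ uniformly on $x\ge x_0$. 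Taking the product of the two Potter expansions, the exponents add to $-1/\gamma\pm\epsilon$ (with the paper's convention of absorbing constants into $\epsilon$), which is exactly the weight appearing in the statement; multiplying by that weight transforms the uniform Potter error into an $O(\epsilon)$ term that vanishes as $\epsilon\downarrow 0$, completing the proof. $\hfill\square$
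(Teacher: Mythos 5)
Your parts (i) and (ii) coincide with the paper's proof, but your part (iii) takes a genuinely different route. The paper never leaves the observable scale: it writes $t\Lambda(x\mathbb{U}_F(t))C(x\mathbb{U}_F(t))$ as the normalized integral $-\frac{C(x\mathbb{U}_F(t))}{C(\mathbb{U}_F(t))}\int_x^\infty\frac{C(\mathbb{U}_F(t))}{C(z\mathbb{U}_F(t))}\,d\frac{\overline F(z\mathbb{U}_F(t))}{\overline F(\mathbb{U}_F(t))}$, splits the deviation from $(\gamma_1/\gamma)x^{-1/\gamma}$ into three terms $D_1,D_2,D_3$, and controls each by Proposition B.1.10 applied to $C$ and $\overline F$ together with integrations by parts; the reciprocal is then handled by the mean value theorem with a uniform lower bound on the intermediate point $\psi(x;t)$. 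You instead exploit the structural identity $\mathbf{F}=\exp(-\Lambda)$ to get $\Lambda\sim\overline{\mathbf{F}}$, replace $C$ by $\overline G\sim p^{-1}\overline{\mathbf{G}}$ via (i), and pin the constant through the intermediate relation $\overline F\sim p^{-1}(\gamma/\gamma_1)\overline{\mathbf{F}}\,\overline{\mathbf{G}}$ from Lemma \ref{Lemma0} combined with $t\overline F(\mathbb{U}_F(t))=1$; uniformity then comes from Potter bounds on each factor rather than from an integral estimate. This is a legitimate and arguably more elementary argument: it trades the paper's integral decomposition for the closed-form solution of (\ref{df}) and the bookkeeping already done in Lemma \ref{Lemma0}. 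Two small points deserve care. First, your phrase that the summed exponent $-1/\gamma\pm\epsilon$ ``is exactly the weight'' conflates the error for $t\Lambda C$ (of size $\epsilon x^{-1/\gamma\pm\epsilon}$) with the error for its reciprocal, which is what the lemma weights; the passage is harmless only because your Potter bounds are two-sided and multiplicative, so the lower bound on $t\Lambda C$ lets you invert and obtain $\bigl|(t\Lambda C)^{-1}-(\gamma/\gamma_1)x^{1/\gamma}\bigr|=O(\epsilon)x^{1/\gamma\pm\epsilon}$ — make that inversion step (the analogue of the paper's mean value theorem with the bound on $\psi$) explicit. Second, the uniformity of $\Lambda(y)\sim\overline{\mathbf{F}}(y)$ over $x\ge x_0$ should be justified exactly as you indicate, via $\sup_{x\ge x_0}\overline{\mathbf{F}}(x\mathbb{U}_F(t))\le\overline{\mathbf{F}}(x_0\mathbb{U}_F(t))\to0$. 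With these points spelled out, your argument delivers the same weighted uniform statement at the same level of rigor as the paper.
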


\begin{proof}
For assertion $\left(  i\right)  ,$ write $C\left(  t\right)  /\overline
{G}\left(  t\right)  =1-\overline{F}\left(  t\right)  /\overline{G}\left(
t\right)  $ and observe that from Lemma $\ref{Lemma0}$ we have $\overline
{F}\left(  t\right)  /\overline{G}\left(  t\right)  =\left(  1+o\left(
1\right)  \right)  \left(  d_{1}/d_{2}\right)  t^{1/\gamma_{2}-1/\gamma}.$
Since $1/\gamma_{2}-1/\gamma<0,$ then $\overline{F}\left(  t\right)
/\overline{G}\left(  t\right)  =o\left(  1\right)  ,$ that is $C\left(
t\right)  /\overline{G}\left(  t\right)  =1+o\left(  1\right)  $ as sought.
For result $\left(  ii\right)  ,$ Lemma $\ref{Lemma0}$ implies that
$\mathbb{U}_{F}\left(  t\right)  =\left(  1+o\left(  1\right)  \right)
\left(  d_{1}t\right)  ^{\gamma}$ (as $t\rightarrow\infty$)$,$ it follows that
$C\left(  \mathbb{U}_{F}\left(  t\right)  \right)  =\left(  1+o\left(
1\right)  \right)  d_{2}\left(  d_{1}t\right)  ^{-\gamma/\gamma_{2}}.$ Since
$0<\gamma/\gamma_{2}<1,$ then for every $0<\nu\leq1,$ $t^{1/\nu}C\left(
\mathbb{U}_{F}\left(  t\right)  \right)  \rightarrow\infty$ as $t\rightarrow
\infty.$ To prove $\left(  iii\right)  ,$ we first show that
\begin{equation}
t\Lambda\left(  x\mathbb{U}_{F}\left(  t\right)  \right)  C\left(
x\mathbb{U}_{F}\left(  t\right)  \right)  -\left(  \gamma_{1}/\gamma\right)
x^{-1/\gamma}=O\left(  \epsilon\right)  x^{-1/\gamma\pm\epsilon}.
\label{gamma}%
\end{equation}
Recalling that $\Lambda\left(  x\right)  =$ $\int_{x}^{\infty}dF\left(
z\right)  /C\left(  z\right)  $ and $\overline{F}\left(  \mathbb{U}_{F}\left(
t\right)  \right)  =1/t,$ we write%
\begin{equation}
t\Lambda\left(  x\mathbb{U}_{F}\left(  t\right)  \right)  C\left(
x\mathbb{U}_{F}\left(  t\right)  \right)  =-\frac{C\left(  x\mathbb{U}%
_{F}\left(  t\right)  \right)  }{C\left(  \mathbb{U}_{F}\left(  t\right)
\right)  }\int_{x}^{\infty}\frac{C\left(  \mathbb{U}_{F}\left(  t\right)
\right)  }{C\left(  z\mathbb{U}_{F}\left(  t\right)  \right)  }\frac
{d\overline{F}\left(  z\mathbb{U}_{F}\left(  t\right)  \right)  }{\overline
{F}\left(  \mathbb{U}_{F}\left(  t\right)  \right)  }. \label{formula1}%
\end{equation}
Observe now that $t\Lambda\left(  x\mathbb{U}_{F}\left(  t\right)  \right)
C\left(  x\mathbb{U}_{F}\left(  t\right)  \right)  -\dfrac{\gamma_{1}}{\gamma
}x^{-1/\gamma}$ may be decomposed into the sum of%
\[
D_{1}\left(  s;t\right)  :=-\left(  \frac{C\left(  x\mathbb{U}_{F}\left(
t\right)  \right)  }{C\left(  \mathbb{U}_{F}\left(  t\right)  \right)
}-x^{-1/\gamma_{2}}\right)  \int_{x}^{\infty}\frac{C\left(  \mathbb{U}%
_{F}\left(  t\right)  \right)  }{C\left(  z\mathbb{U}_{F}\left(  t\right)
\right)  }\frac{d\overline{F}\left(  z\mathbb{U}_{F}\left(  t\right)  \right)
}{\overline{F}\left(  \mathbb{U}_{F}\left(  t\right)  \right)  },
\]%
\[
D_{2}\left(  s;t\right)  :=-x^{-1/\gamma_{2}}\int_{x}^{\infty}\left(
\frac{C\left(  \mathbb{U}_{F}\left(  t\right)  \right)  }{C\left(
z\mathbb{U}_{F}\left(  t\right)  \right)  }-z^{1/\gamma_{2}}\right)
\frac{d\overline{F}\left(  z\mathbb{U}_{F}\left(  t\right)  \right)
}{\overline{F}\left(  \mathbb{U}_{F}\left(  t\right)  \right)  }%
\]
and%
\[
D_{3}\left(  s;t\right)  :=-x^{-1/\gamma_{2}}\int_{x}^{\infty}z^{1/\gamma_{2}%
}d\left(  \frac{\overline{F}\left(  z\mathbb{U}_{F}\left(  t\right)  \right)
}{\overline{F}\left(  \mathbb{U}_{F}\left(  t\right)  \right)  }-z^{-1/\gamma
}\right)  .
\]
By applying Proposition B.1.10 in \cite{deHF06} to both $C$ and $\overline{F}$
with integrations by parts, it is easy to verify that
\[
\left\vert t\Lambda\left(  x\mathbb{U}_{F}\left(  t\right)  \right)  C\left(
x\mathbb{U}_{F}\left(  t\right)  \right)  -\left(  \gamma_{1}/\gamma\right)
x^{-1/\gamma}\right\vert \leq\epsilon x^{-1/\gamma\pm\epsilon}.
\]
Observe now that $t\Lambda\left(  x\mathbb{U}_{F}\left(  t\right)  \right)
C\left(  x\mathbb{U}_{F}\left(  t\right)  \right)  -\left(  \gamma/\gamma
_{1}\right)  x^{1/\gamma}$ is equal to%
\[
\left(  \left(  t\Lambda\left(  x\mathbb{U}_{F}\left(  t\right)  \right)
C\left(  x\mathbb{U}_{F}\left(  t\right)  \right)  \right)  ^{-1}\right)
^{-1}-\left(  \left(  \gamma_{1}/\gamma\right)  x^{-1/\gamma}\right)  ^{-1}.
\]
By using the mean value theorem, the latter equals%
\[
\frac{\left(  \gamma_{1}/\gamma\right)  x^{-1/\gamma}-t\Lambda\left(
x\mathbb{U}_{F}\left(  t\right)  \right)  C\left(  x\mathbb{U}_{F}\left(
t\right)  \right)  }{\left(  \psi\left(  x;t\right)  \right)  ^{2}},
\]
where $\psi\left(  x;t\right)  $ is between $\left(  \gamma_{1}/\gamma\right)
x^{-1/\gamma}$ and $t\Lambda\left(  x\mathbb{U}_{F}\left(  t\right)  \right)
C\left(  x\mathbb{U}_{F}\left(  t\right)  \right)  .$ In view of the
representation $\left(  \ref{formula1}\right)  $ and Potter's inequalities,
applied to $C$ and $\overline{F},$ with an integration by parts, we get
$t\Lambda\left(  x\mathbb{U}_{F}\left(  t\right)  \right)  C\left(
x\mathbb{U}_{F}\left(  t\right)  \right)  \geq\left(  1-\epsilon\right)
x^{-1/\gamma\pm\epsilon}.$ It follows that $\left(  \psi\left(  x;t\right)
\right)  ^{2}\geq\left(  1-\epsilon\right)  ^{2}x^{-2/\gamma\pm2\epsilon}$ and
therefore%
\[
\left\vert t\Lambda\left(  x\mathbb{U}_{F}\left(  t\right)  \right)  C\left(
x\mathbb{U}_{F}\left(  t\right)  \right)  -\left(  \gamma/\gamma_{1}\right)
x^{1/\gamma}\right\vert \leq\left(  1-\epsilon\right)  ^{-2}\epsilon
x^{1/\gamma\pm\epsilon},
\]
as sought.
\end{proof}

\end{document}